\date{September 2, 2025}
\theoremstyle{plain}
 \newtheorem{theorem}{Theorem}[section]
 \newtheorem{proposition}[theorem]{Proposition}
 \newtheorem*{fact*}{Fact}
 \newtheorem{lemma}[theorem]{Lemma}
 \newtheorem{corollary}[theorem]{Corollary}
 \theoremstyle{remark}
 \newtheorem{definition}[theorem]{Definition}
 \newtheorem{remark}[theorem]{Remark}
 \newtheorem{example}[theorem]{Example}
\numberwithin{equation}{section}
\newcommand{\R}{\boldsymbol{R}}
\newcommand{\vect}[1]{\boldsymbol{#1}}
\begin{document}
%%%%%%%%%%%%%%%%%%%%%%%%%%%%%%%%%%%%
\title{Isometric deformations of pillow boxes}

\author{
Atsufumi Honda\footnote{Corresponding author.}\\
\vspace{2mm}
{\it\small Department of Applied Mathematics, Yokohama National University,}
\vspace{-2mm}\\
{\it\small Hodogaya, Yokohama 240-8501, Japan}\\
{\small\tt honda-atsufumi-kp@ynu.ac.jp}\\
{\small https://orcid.org/0000-0003-0515-2866}
\vspace{6mm}\\
Miyuki Koiso\\
\vspace{2mm}
{\small\it Institute of Mathematics for Industry, Kyushu University,}
\vspace{-2mm}\\
{\it\small Motooka Nishi-ku, Fukuoka 819-0395, Japan}\\
{\small\tt koiso@math.kyushu-u.ac.jp}\\
{\small https://orcid.org/0000-0002-3686-7277}
}

\maketitle

\begin{abstract}
Pillow boxes are surfaces used for gift boxes, packaging, and even architectural applications.
By definition, a pillow box is isometric to a double rectangle consisting of two copies of a rectangle.
If the crease pattern is allowed to change, 
there exist continuous isometric deformations 
from a pillow box
to a double rectangle.
However, practical applications often require preserving the crease pattern.
In this paper, 
we classify isometric deformations from a pillow box to a double rectangle 
among curved foldings that preserve the crease pattern.
As a corollary, 
we prove that such an isometric deformation 
necessarily changes the topology of a pillow box.
\end{abstract}

\renewcommand{\thefootnote}{\fnsymbol{footnote}}
\footnote[0]{2020 {\it Mathematics Subject Classification.}
Primary 53A05; %Surfaces in Euclidean and related spaces
Secondary 51M15. %Geometric constructions in real or complex geometry
} 
\footnote[0]{{\it Key Words and Phrases.} 
curved folding,
origami,
developable surface,
pillow box,
isometric deformation,
first fundamental form.
} 
%\footnote[0]{}

%%%%%%%%%%%%%%%%%%%%%%%%%%%%%%%%%%
%%%%%%%%%%%%%%%%%%%%%%%%%%%%%%%%%%
%%%%%%%%%%%%%%%%%%%%%%%%%%%%%%%%%%
%%%%%%%%%%%%%%%%%%%%%%%%%%%%%%%%%%
%%%%%%%%%%%%%%%%%%%%%%%%%%%%%%%%%%
%%%%%%%%%%%%%%%%%%%%%%%%%%%%%%%%%%
%%%%%%%%%%%%%%%%%%%%%%%%%%%%%%%%%%
%%%%%%%%%%%%%%%%%%%%%%%%%%%%%%%%%%
\section{Introduction}

\begin{figure}[htb]
\centering
 \begin{tabular}{c@{\hspace{18mm}}c@{\hspace{5mm}}c}
  \resizebox{5.0cm}{!}{\includegraphics{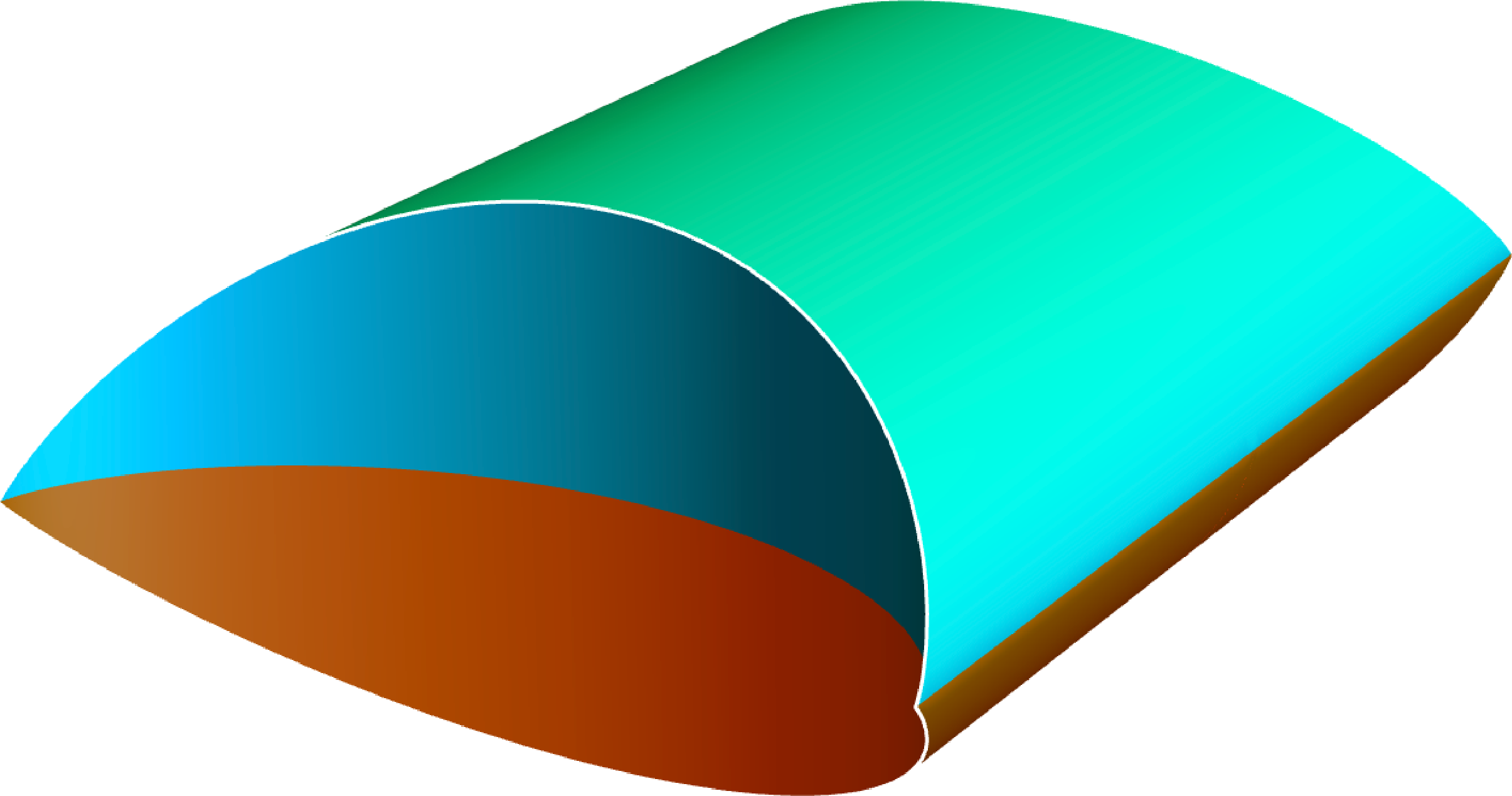}}&
  \resizebox{2.5cm}{!}{\includegraphics{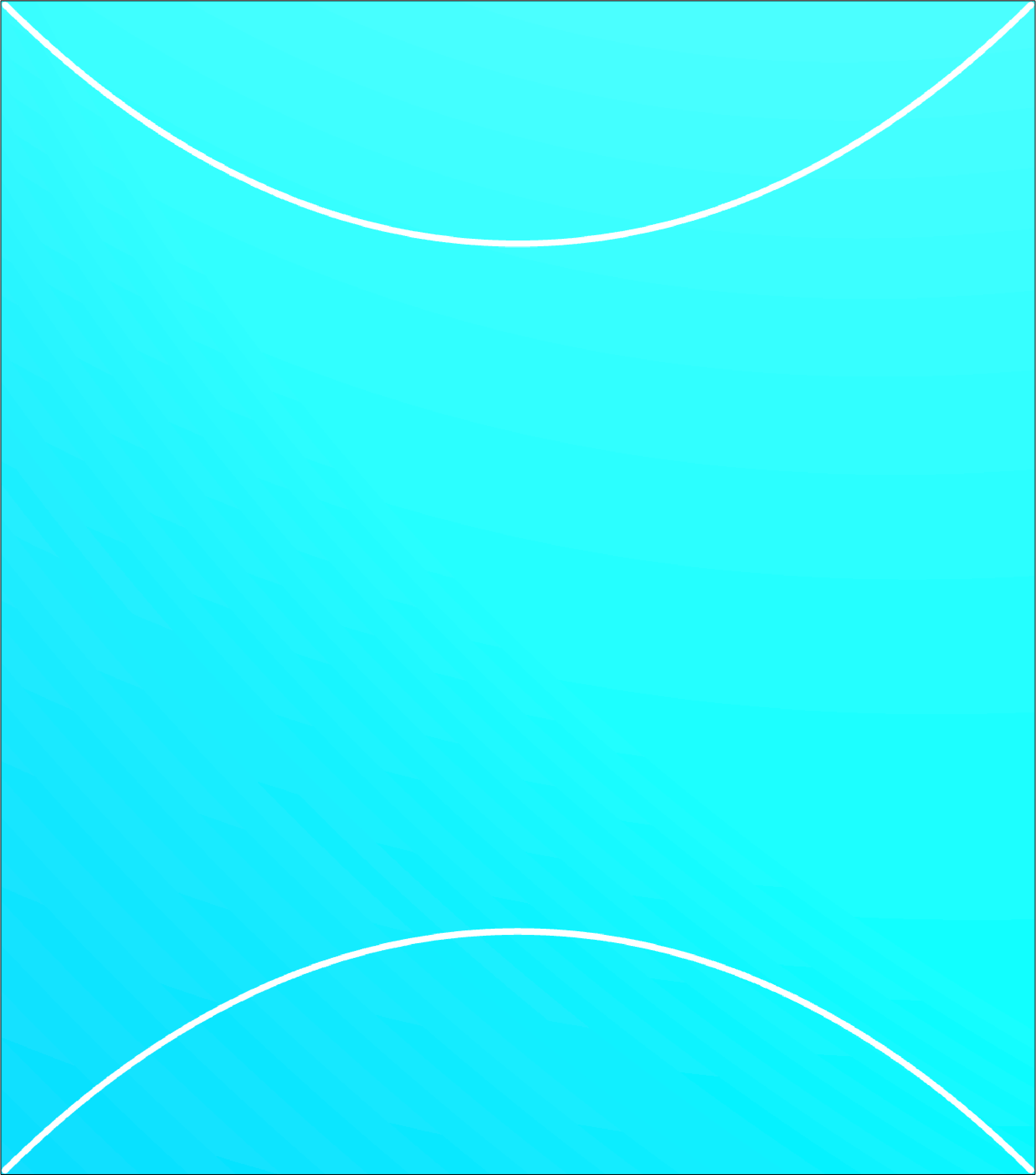}}&
  \resizebox{2.5cm}{!}{\includegraphics{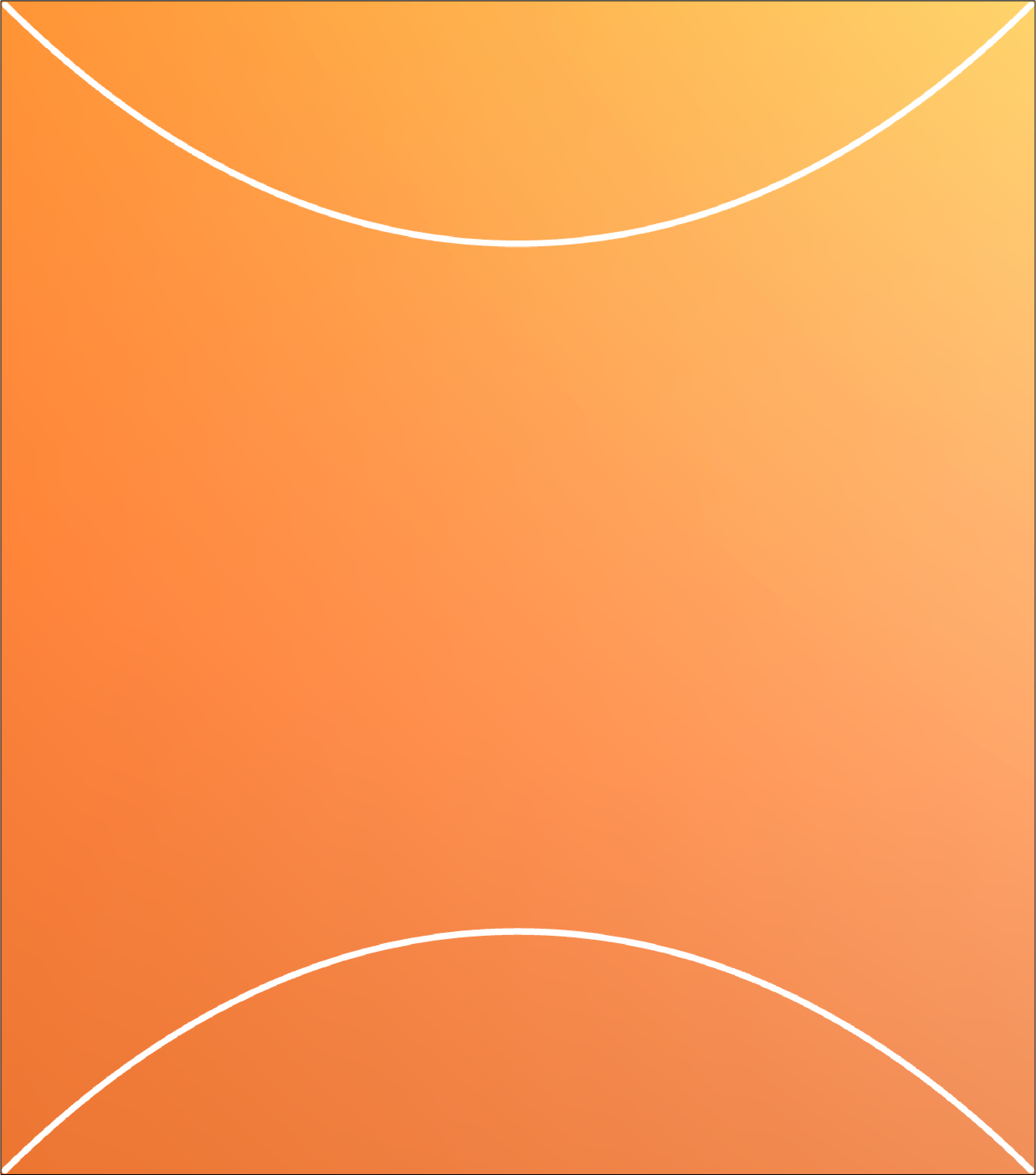}}
 \end{tabular}
  \caption{A pillow box (left) is a surface created by 
  folding two copies of a rectangle (right). 
  See Definition \ref{def:pillow-box},
  cf.\ Subsection \ref{sec:folding-rectangles}.}
\label{fig:rectangle-pillow}
\end{figure}

A {\it pillow box} is a piecewise flat surface constructed 
by a curved folding (i.e., origami) of two copies of a rectangle 
(cf.\ Figure \ref{fig:rectangle-pillow}; see also Definition \ref{def:pillow-box} for details).
Pillow boxes are commonly used as gift boxes and 
packaging materials, and thus have practical applications.
Koiso \cite{Koiso} proved that
there uniquely exists a pillow box that maximizes the enclosed volume,
and its base curve is an elastic curve
(cf.\ Figure \ref{fig:planes-quarter}).
Mitani \cite{Mitani} discusses the design and 
the enclosed volume for specific examples of pillow boxes.

Recently, pillow boxes have also been used in the construction of 
temporary housing  
or temporary shelters
(cf.\ Tachi \cite{Tachi}).
%In such applications, the ability to construct them without overlap is an important issue.
Producing temporary housing entails deforming a rectangular sheet of paper into a pillow box without stretching or shrinking.
Such a process is mathematically referred to as an {\it isometric deformation}.

In this paper, we study isometric deformations of pillow boxes.
By definition, a pillow box is isometric to a {\it double rectangle}, 
which consists of two copies of a rectangle 
(see Definition~\ref{def:double-rectangle} for details).
If the crease pattern is allowed to change, it is possible to construct a continuous isometric deformation from a pillow box to a double rectangle (cf.\ Figure~\ref{fig:pillow-0-deformation-long}).
However, in practical applications, isometric deformations that preserve the crease pattern are preferable.
We classify isometric deformations from a pillow box to a double rectangle among curved foldings that preserve the crease pattern (Theorem~\ref{thm:Koiso-deformation}).
As a corollary, we prove that 
such an isometric deformation 
necessarily changes the topology of a pillow box
(Corollary~\ref{cor:isometric-deformation}).

\begin{figure}[htb]
\centering
 \begin{tabular}{c@{\hspace{4mm}}c@{\hspace{4mm}}c}
  \resizebox{4.0cm}{!}{\includegraphics{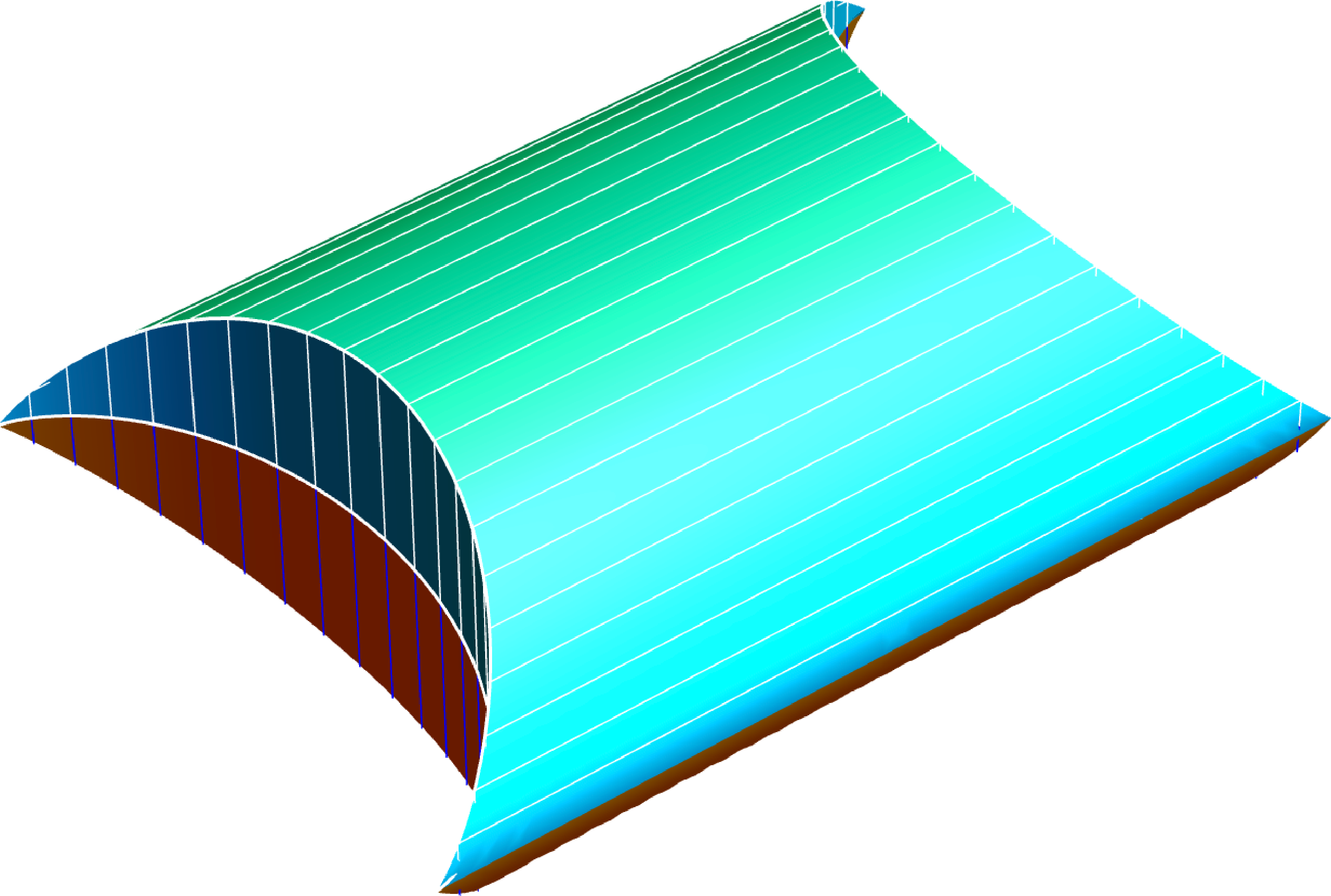}}&
  \resizebox{4.0cm}{!}{\includegraphics{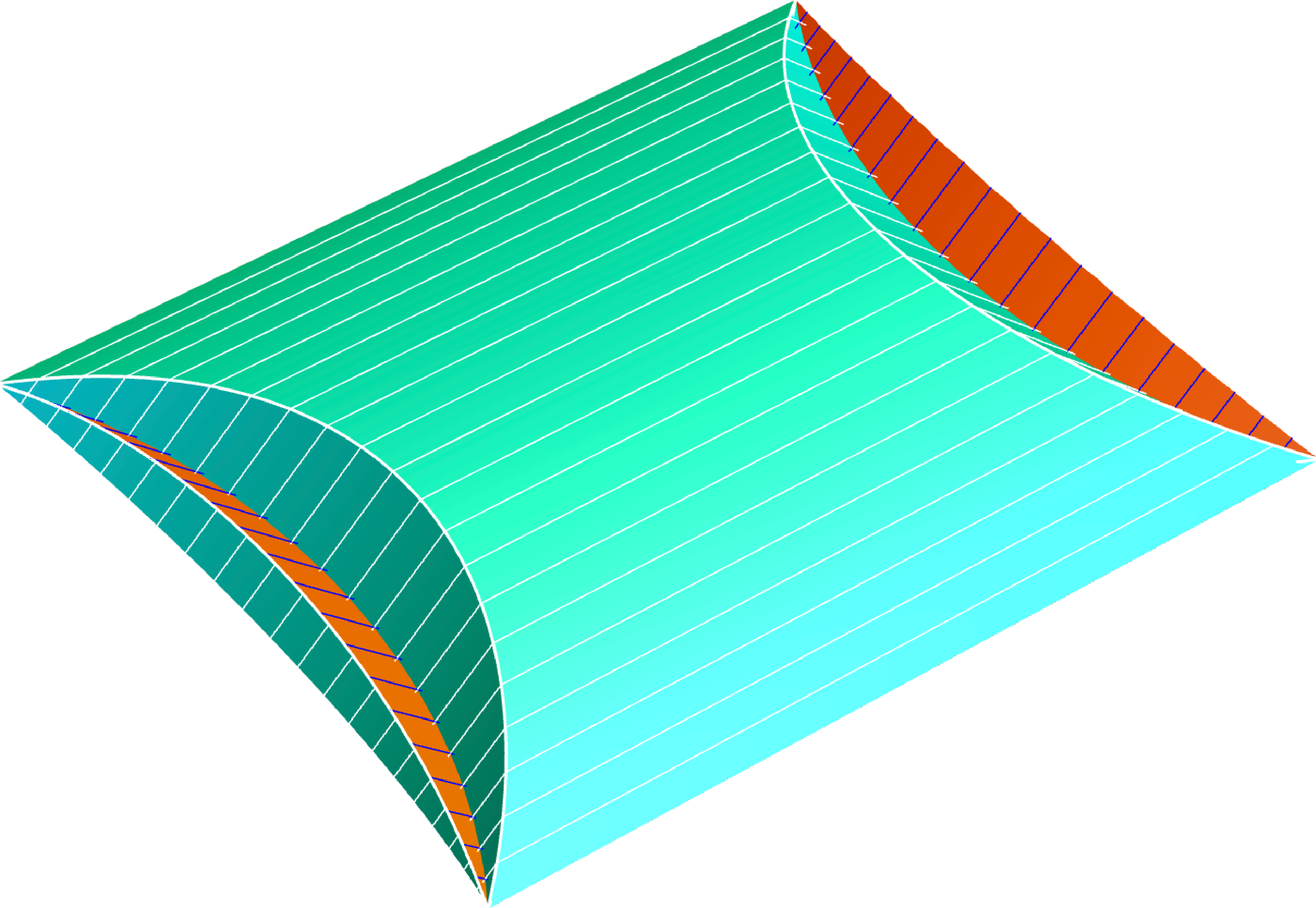}}&
  \resizebox{4.0cm}{!}{\includegraphics{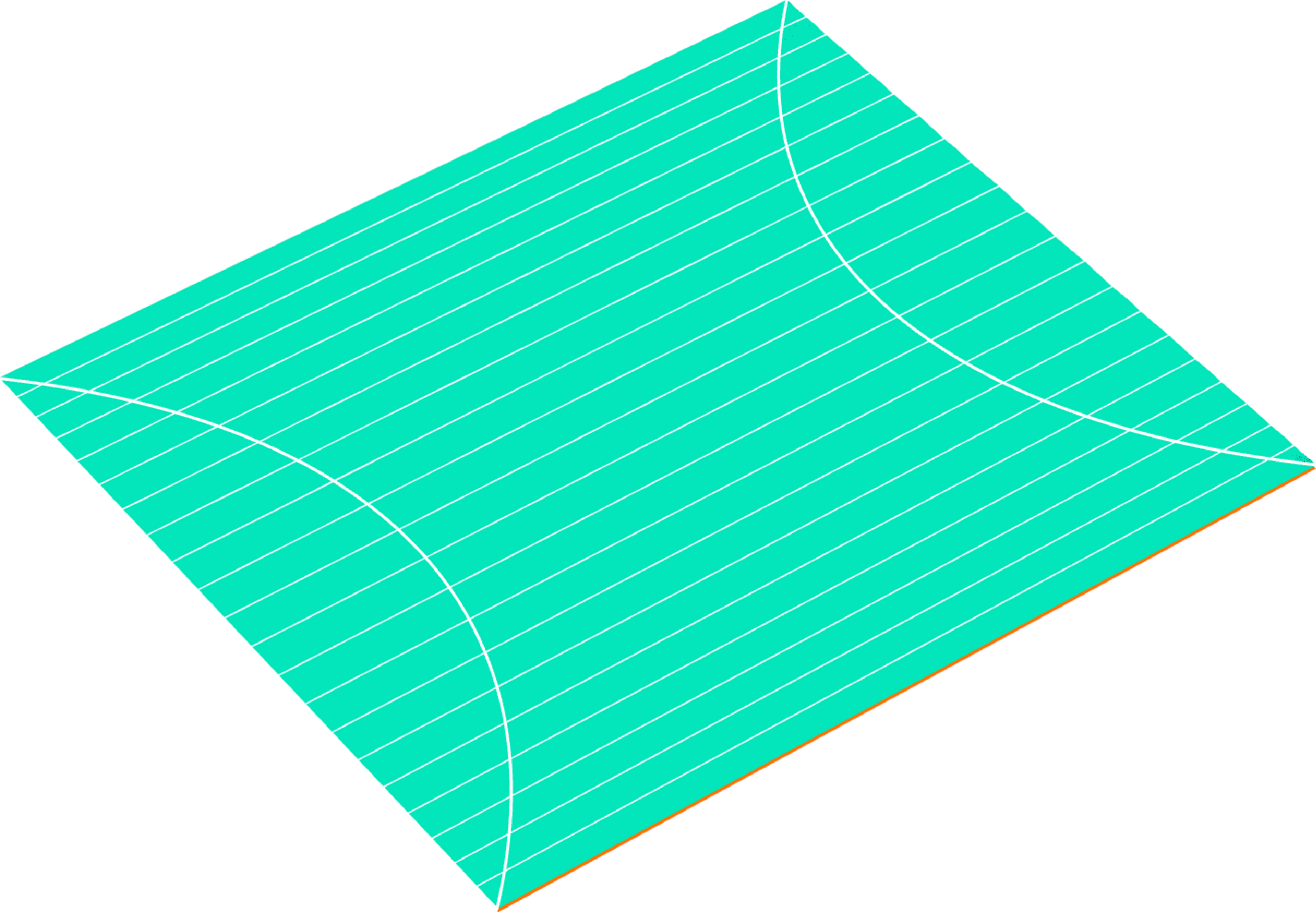}}\\
  {\footnotesize (a) $t=0$}&
  {\footnotesize (b) $t=0.6$}&
  {\footnotesize (c) $t=1$}
 \end{tabular}
 \caption{An isometric deformation of a pillow box (a) to a double rectangle (c) that preserves the crease pattern.
See Example~\ref{ex:Koiso-deformation}.
We may observe that both (a) and (c) are homeomorphic to a sphere, whereas (b) is not.}
\label{fig:pillow-deformation}
\end{figure}

Another motivation for this study is the so-called ``bellows conjecture''.
The bellows conjecture states that the enclosed volume of a polyhedron is invariant under isometric deformations.
This conjecture was formulated 
by R.~Connelly and D.~Sullivan (cf.\ \cite{Connelly, Ghys}), 
and was proved by Sabitov \cite{Sabitov, Sabitov2} and 
Connelly, Sabitov and Walz \cite{CSW}.
Our result suggests that the bellows conjecture might hold for a wider class of surfaces.

This paper is organized as follows.
In Section~\ref{sec:prelim}, we review the basic concepts of curved foldings.
In Section~\ref{sec:Pillow-boxes}, we introduce the definition of pillow boxes and observe that the quarter halves of pillow boxes are curved foldings.
In Section~\ref{sec:origami-deformation}, we define origami deformations, which are isometric deformations from pillow boxes to double rectangles among curved foldings that preserve the crease pattern (Definition~\ref{def:origami-deformation}).
We then obtain a representation formula for origami deformations (Theorem~\ref{thm:Koiso-deformation}).
As a corollary, we prove that such an isometric deformation does not preserve the topology of a pillow box (Corollary~\ref{cor:isometric-deformation}).

%%%%%%%%%%%%%%%%%%%%%%%%%%%%%%%%%%
%%%%%%%%%%%%%%%%%%%%%%%%%%%%%%%%%%
%%%%%%%%%%%%%%%%%%%%%%%%%%%%%%%%%%
%%%%%%%%%%%%%%%%%%%%%%%%%%%%%%%%%%
\section{Preliminaries}
\label{sec:prelim}

In this section, 
we review the basic concepts of curved foldings.
For details, see \cite{HNSUY3} (cf.\ \cite{FT0, HNSUY4, Hu, KFCMSP}).
We let $\vect{c} : J \to \R^3$ be an injective regular curve
parametrized by arc length,
where $J$ is a non-empty closed bounded interval.
We call $T(s) := \vect{c}'(s)$ 
the {\it unit tangent vector field} along $C := \vect{c}(J)$,
where the prime denotes differentiation with respect to $s$.
Suppose that the {\it curvature function} 
$\kappa(s) := \|T'(s)\|$ is positive on $J$.
Here, we set 
$\|\vect{x}\| := \sqrt{\vect{x} \cdot \vect{x}}$
for $\vect{x} \in \R^3$, where ``$\cdot$'' denotes the canonical inner product of $\R^3$.
The {\it principal normal vector field} $N(s)$ and 
the {\it binormal vector field} $B(s)$ are defined by
\[
N(s) := \frac{1}{\kappa(s)} T'(s), 
\quad
B(s) := T(s) \times N(s),
\]
respectively, where ``$\times$'' denotes the vector product in $\R^3$.
Then $\tau(s) := N'(s) \cdot B(s)$ 
is called the {\it torsion function}.

Let $\alpha(s)$ and $\beta(s)$ ($s \in J$)
be smooth functions satisfying
\begin{equation}\label{eq:angle}
0 < |\alpha(s)| < \frac{\pi}{2}, \quad 
0 < \beta(s) < \pi
\qquad (s \in J).
\end{equation}
We set 
\begin{align*}
&p(s, v) := \vect{c}(s) + v\, \xi(s)\\
&
\Bigl(\xi(s) :=
\cos \beta(s) T(s) +
\sin \beta(s) \bigl(
\cos \alpha(s) N(s) + \sin \alpha(s) B(s) \bigr) \Bigr).
\end{align*}
Since $\vect{c}'(s) = T(s)$ and $\xi(s)$
are linearly independent, there exist
smooth functions $\varepsilon(s) > 0$ and $\delta(s) < 0$ on $J$
such that the ruled surface $p : U \to\R^3$
is an embedding on 
\[
U := 
\left\{ (s, v) \in \R^2 \mid
s \in J,~ \delta(s) \leq v \leq \varepsilon(s) \right\}.
\]
The functions $\alpha(s)$ and $\beta(s)$ are 
called the {\it first angular function} and the 
{\it second angular function}
of $p(s, v)$, respectively.
Then, $p(s, v)$ is developable 
--- i.e., its Gaussian curvature $K$ is identically zero --- 
if and only if
\begin{equation}\label{eq:beta}
\cot \beta(s) = \frac{\alpha'(s) + \tau(s)}{\kappa(s) \sin \alpha(s)}
\end{equation}
holds.
In particular, $p(s, v)$ is uniquely determined 
by the first angular function $\alpha(s)$.
Such $p(s, v)$ is called a {\it developable surface along $\vect{c}(s)$}.

We set 
\begin{align}
\label{eq:conormal}
\nu(s) &:= -\sin \alpha(s) N(s) + \cos \alpha(s) B(s),\\
\label{eq:normal-p}
\vect{n}_g(s) &:= \cos \alpha(s) N(s) + \sin \alpha(s) B(s),
\end{align}
which give
the unit normal vector field of $p(s, v)$
and the unit conormal vector field along $\vect{c}(s)$, 
respectively.
Then, 
\begin{equation}\label{eq:kappa-g}
\kappa_g(s) := T'(s) \cdot \vect{n}_g(s) = \kappa(s) \cos \alpha(s)
~(>0)
\end{equation}
is the geodesic curvature of $\vect{c}(s)$ as a curve on $p(s, v)$,
which is positive by \eqref{eq:angle}.

Let $\gamma : J \to \R^2$ be a plane curve 
whose curvature function coincides with $\kappa_g(s)$.
Since $p : U \to\R^3$ is a developable surface,
it can be developed onto a plane $\R^2$.
Since a developing map is an isometry,
the space curve $C = \vect{c}(J)$ 
corresponds to the plane curve $\Gamma := \gamma(J)$.
Then, curved foldings are defined as follows.

\begin{definition}\label{def:curved-folding}
Let $p, q : U \to\R^3$ be developable surfaces along 
a space curve $\vect{c}(s)$ with positive curvature.
Suppose that 
the first angular functions of $p(s, v)$ and $q(s, v)$
are given by $\alpha(s)$ and $-\alpha(s)$,
respectively.
$($Such a developable surface $q(s, v)$
is called the {\it dual} of $p(s, v).)$
We set $U=U_+\cup U_-$, 
where $U_{\pm}:=\{ (s, v) \in U \mid \pm v \geq 0 \}$.
Then, we call a map $X : U \to\R^3$ defined by
\[
X(s, v) :=
\begin{cases}
p(s, v) & ((s, v) \in U_+), \\
q(s, v) & ((s, v) \in U_-),
\end{cases}
\]
the {\it origami-map} associated with
the developable surfaces $p$ and $q$.
The image 
\[
P := X(U)
\]
of the origami map $X$ 
is called a {\it curved folding}.
The space curve $C = \vect{c}(J)$
and the plane curve $\Gamma = \gamma(J)$
are called 
the {\it crease} and the {\it crease pattern} of 
the curved folding $P$,
respectively.
\end{definition}

%%%%%%%%%%%%%%%%%%%%%%%%%%%%%%%%%%
%%%%%%%%%%%%%%%%%%%%%%%%%%%%%%%%%%
%%%%%%%%%%%%%%%%%%%%%%%%%%%%%%%%%%
%%%%%%%%%%%%%%%%%%%%%%%%%%%%%%%%%%
\section{Pillow boxes}
\label{sec:Pillow-boxes}

In this section,
we introduce the definition of pillow boxes
(Definition~\ref{def:pillow-box}).
Quarter halves of pillow boxes are shown to be curved foldings
(Proposition~\ref{prop:origami-parametrization}).
Via the developing maps of developable surfaces,
we see that a pillow box is isometric to a double rectangle 
consisting of two copies of a rectangle.
A procedure for constructing a pillow box from a double rectangle 
is also presented.

%%%%%%%%%%%%%%%%%%%%%%%%%%%%%%%%%%
%%%%%%%%%%%%%%%%%%%%%%%%%%%%%%%%%%
\subsection{Definition of pillow boxes}
\label{sec:quarter-domain}

We fix positive numbers $b, d > 0$.
Let $f : [0, d] \to \R$ be a smooth function 
which satisfies
\begin{equation}\label{eq:function-f}
f(0) = f(d) = 0, \qquad
0 < f(x) < b, \quad
f''(x) < 0
\quad
(0 < x < d).
\end{equation}
We set 
\[
P := P_+ \cup P_-,
\]
where
\begin{equation}\label{eq:P+-}
\begin{split}
P_+ &:= \{ (x, y, f(x)) \in \R^3 \mid 0 \leq x \leq d,\; f(x) \leq y \leq b \},\\
P_- &:= \{ (x, f(x), z) \in \R^3 \mid 0 \leq x \leq d,\; 0 \leq z \leq f(x) \}.
\end{split}
\end{equation}
We denote by $\rho_V$ and $\rho_H$
the reflections with respect to the planes $y = b$ and $z = 0$,
which are defined by 
\begin{equation}\label{eq:reflection}
  \rho_V(x, y, z) := (x, 2b - y, z), \qquad
  \rho_H(x, y, z) := (x, y, -z)
\end{equation}
for each $(x, y, z) \in \R^3$.

\begin{definition}\label{def:pillow-box}
We call the union
\begin{equation}\label{eq:M-entire}
M := P \cup \rho_V(P) \cup \rho_H(P) \cup \rho_H \circ \rho_V(P)
\end{equation}
a {\it pillow box}.
See Figure~\ref{fig:rectangle-pillow}, left.
Each of $P$, $\rho_V(P)$, $\rho_H(P)$, and $\rho_H \circ \rho_V(P)$
is called a {\it quarter domain} of $M$.
A pillow box $M$ is symmetric with respect to 
the planes $y = b$ and $z = 0$.
The plane $y = b$ is called 
the {\it vertical plane} of the pillow box $M$,
which is denoted by $V_M$,
and the plane $z = 0$ is called the {\it horizontal plane},
which is denoted by $H_M$.
See Figure~\ref{fig:planes-quarter}.
\end{definition}

By definition, a pillow box $M$
is homeomorphic to the $2$-dimensional sphere $S^2$.

\begin{figure}[htb]
\centering
 \begin{tabular}{c@{\hspace{2mm}}c@{\hspace{5mm}}c}
  \resizebox{5cm}{!}{\includegraphics{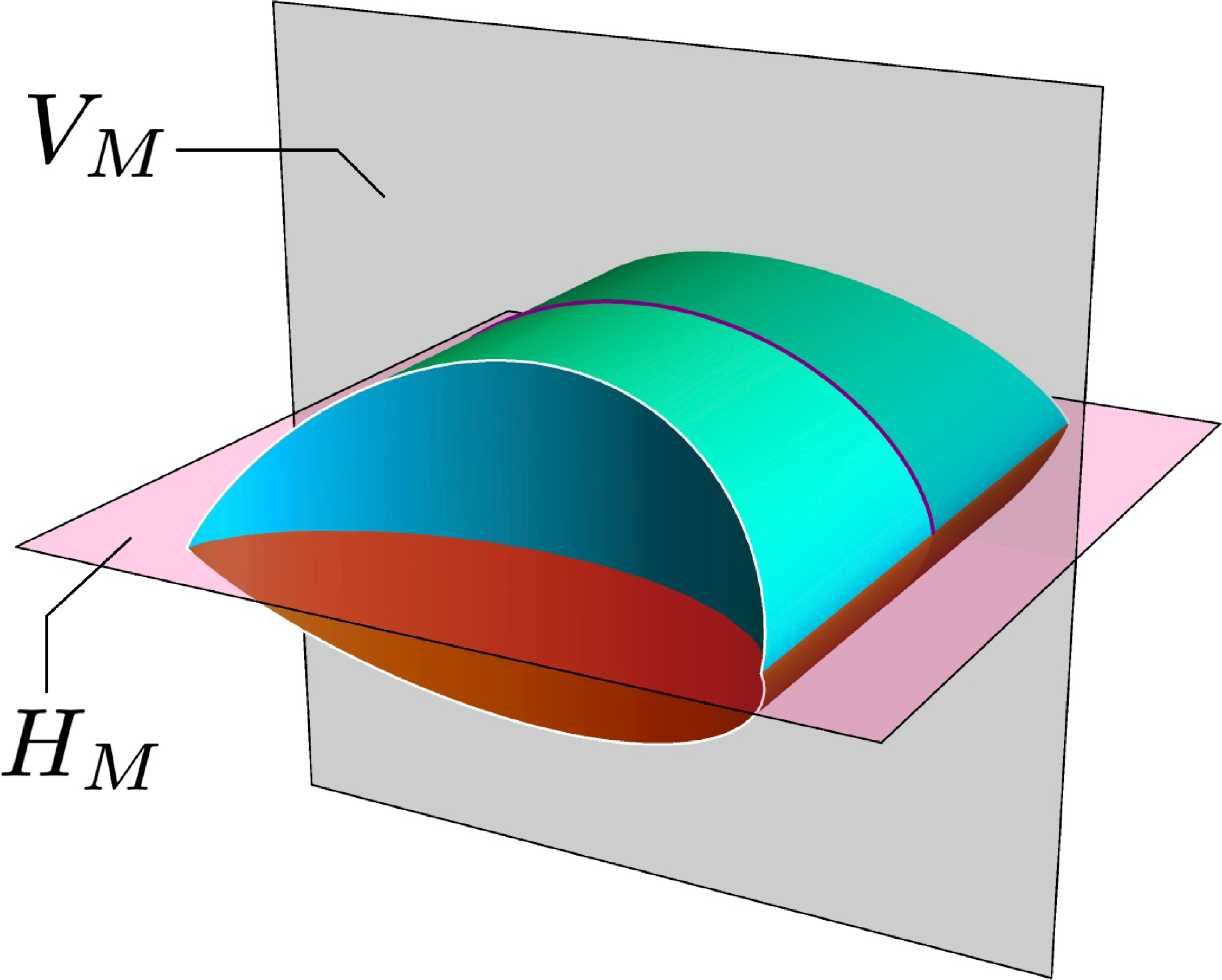}}&
  \resizebox{4.0cm}{!}{\includegraphics{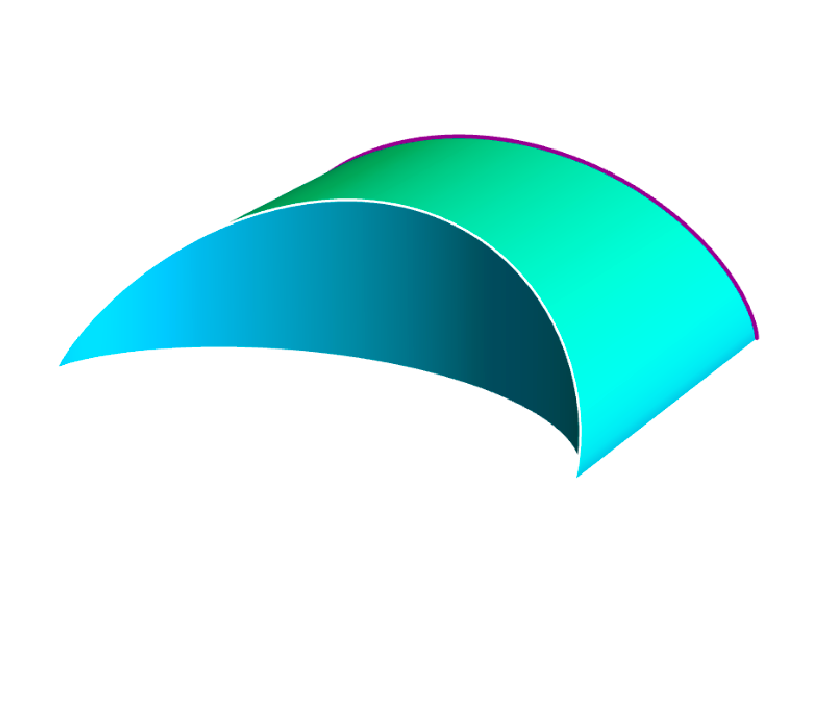}}&
  \resizebox{3.5cm}{!}{\includegraphics{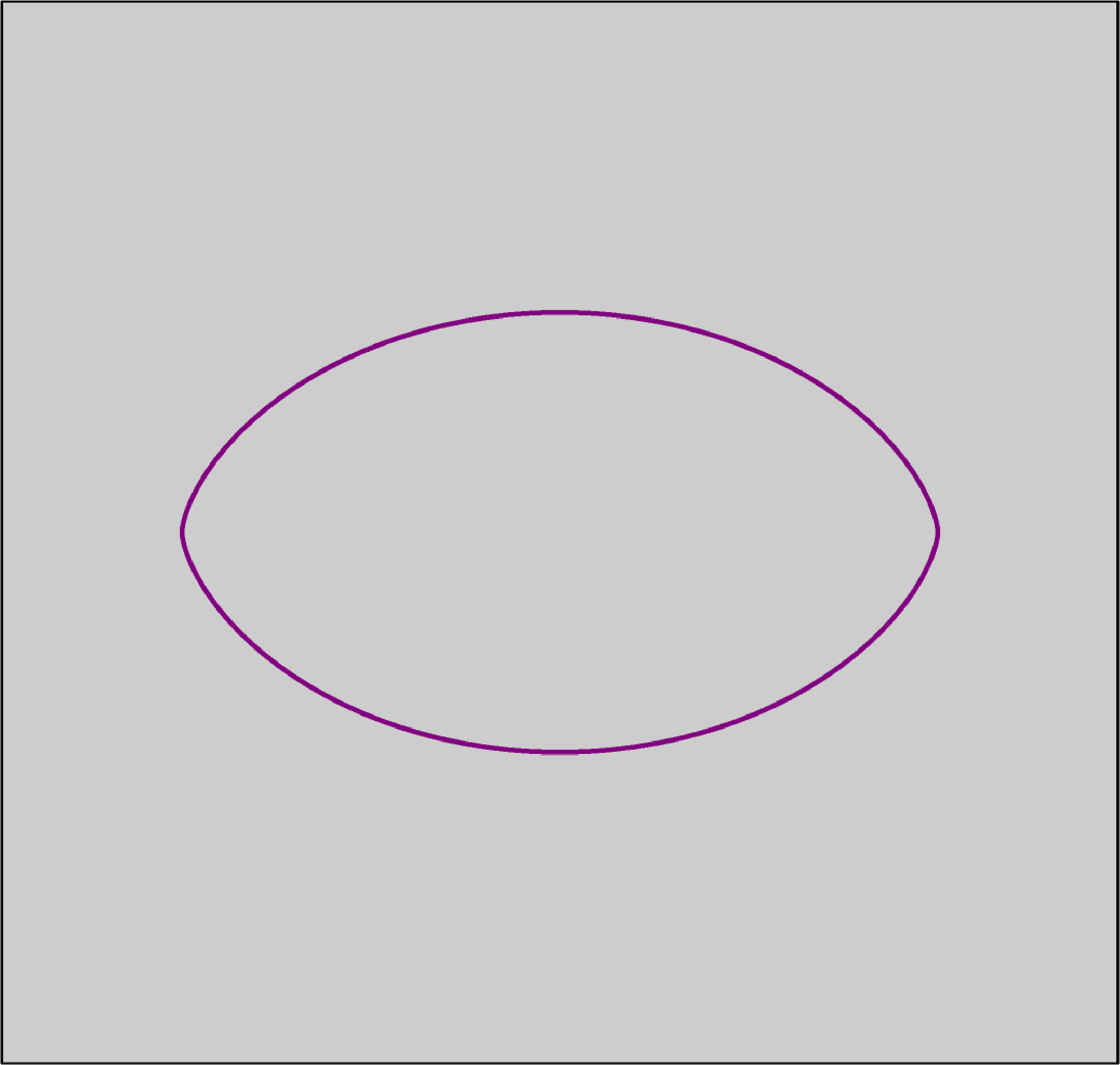}}
 \end{tabular}
 \caption{Left: 
a pillow box $M$ with its horizontal and vertical planes, $H_M$ and $V_M$.
Center: 
a quarter domain $P$ of the pillow box $M$.
Right:
the intersection of a pillow box $M$ and its vertical plane $V_M$
is the union of two regular curves, which is called the {\it base curve}.
The base curve is parametrized as $(x, \pm f(x))$ in the vertical plane.
}
\label{fig:planes-quarter}
\end{figure}

%%%%%%%%%%%%%%%%%%%%%%%%%%%%%%%%%%
%%%%%%%%%%%%%%%%%%%%%%%%%%%%%%%%%%
\subsection{Pillow boxes as curved foldings}
\label{sec:pillow-box-origami}

The intersection 
\[
C := P_+ \cap P_-
= \{ (x, f(x), f(x)) \in \R^3 \mid 0 \leq x \leq d \}
\]
is a regular curve embedded in $\R^3$
of finite length $L$.
We define $\vect{c} : [0, L] \to \R^3$ as
\begin{equation}\label{eq:crease}
\vect{c}(s) := 
\left( \int_{0}^s \sigma(w) \, dw,\, \zeta(s),\, \zeta(s) \right)
\qquad
\left( \sigma(s) := \sqrt{1 - 2\, \zeta'(s)^2} \right),
\end{equation}
which is an arc-length parametrization of $C$.
Since $x=\int_{0}^s \sigma(w) \, dw$ 
is a parameter change, $\sigma(s)\ne0$ holds,
and hence, we have
\begin{equation}\label{eq:zeta-prime}
1 - 2\, \zeta'(s)^2 > 0.
\end{equation}
Moreover, \eqref{eq:function-f} holds if and only if 
\begin{equation}\label{eq:boundary-condition}
\zeta(0) = \zeta(L) = 0,
\qquad
0 < \zeta(s) < b,
\qquad
\zeta''(s) < 0
\end{equation}
hold for each $s \in (0, L)$.
Let $p, q : U \to\R^3$ 
be developable surfaces defined by
\begin{equation}\label{eq:pq}
p(s, v)
:= \vect{c}(s) + v\, \xi, \qquad
q(s, v)
:= \vect{c}(s) + v\, \check{\xi},
\end{equation}
where 
\begin{gather}
\label{eq:D}
U := \{ (s, v) \in \R^2 \mid 0 \leq s \leq L,\, \zeta(s) - b \leq v \leq \zeta(s) \},\\
\label{eq:xi-xi-hat}
\xi :=(0, 0, -1), \qquad
\check{\xi} := (0, -1, 0).
\end{gather}
We define a map $X : U \to\R^3$ as
\begin{equation}\label{eq:pillow-origami}
X(s, v) := 
\begin{cases} 
p(s, v) & ((s, v) \in U_+),\\
q(s, v) & ((s, v) \in U_-),
\end{cases}
\end{equation}
where
\begin{align}
\label{eq:D+}
U_+ &:= \{ (s, v) \in \R^2 \mid 0 \leq s \leq L,\, 0 \leq v \leq \zeta(s) \},\\
\label{eq:D-}
U_- &:= \{ (s, v) \in \R^2 \mid 0 \leq s \leq L,\, \zeta(s) - b \leq v \leq 0 \}.
\end{align}
Then, the image $X(U)$ is 
a quarter domain $P$ of the pillow box $M$
given in \eqref{eq:M-entire}.
Then, the following holds:

\begin{proposition}\label{prop:origami-parametrization}
The map $X : U \to\R^3$ defined by \eqref{eq:pillow-origami}
is an origami-map such that $X$ 
is a homeomorphism onto its image $P = X(U)$. 
$($We call $X$ the {\it origami-parametrization} 
of $P$.$)$
\end{proposition}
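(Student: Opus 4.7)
The plan is to verify each clause of Definition~\ref{def:curved-folding} for the pair $(p, q)$ and then deduce the homeomorphism property by a compactness argument. The crucial observation that makes the computation tractable is that the image of $\vect{c}$ lies in the affine plane $y = z$, so $\vect{c}$ is a planar curve; consequently its binormal is the constant vector $B = (0, 1, -1)/\sqrt{2}$ and its torsion vanishes identically. A direct computation from \eqref{eq:crease} gives $T = (\sigma, \zeta', \zeta')$ and $\kappa = -\sqrt{2}\,\zeta''/\sigma$, which is positive on the interior of $[0, L]$ by \eqref{eq:boundary-condition}. The principal normal $N$ is then obtained from $N = T'/\kappa$.

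Next I would identify the angular functions of $p$ and $q$. Since the ruling directions $\xi$ and $\check\xi$ in \eqref{eq:xi-xi-hat} are constant, both ruled surfaces are generalized cylinders, hence developable, so $K \equiv 0$ is automatic. To extract $\alpha$ and $\beta$ explicitly, I would decompose $\xi$ and $\check\xi$ in the Frenet frame. The observation $\xi \cdot T = \check\xi \cdot T = -\zeta'$ shows that both surfaces share the same second angular function $\beta$ with $\cos\beta = -\zeta'$, so $\sin\beta = \sqrt{1-\zeta'^2} > 0$ and $\beta \in (0, \pi)$. Matching the $N$- and $B$-components of $\xi$ and $\check\xi$ then yields $\cos\alpha_p = \cos\alpha_q = \sigma/\sqrt{2(1-\zeta'^2)}$ and $\sin\alpha_p = -\sin\alpha_q = 1/\sqrt{2(1-\zeta'^2)}$. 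Hence $\alpha_q = -\alpha_p$, which is precisely the duality condition in Definition~\ref{def:curved-folding}, and $0 < |\alpha_p| < \pi/2$ as required by \eqref{eq:angle}. Developability can be cross-checked by substituting these expressions into \eqref{eq:beta} using $\tau = 0$.

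For the homeomorphism statement, compactness of $U$ together with Hausdorffness of $\R^3$ reduces the claim to continuity and injectivity of $X$. Continuity is immediate because $p(s, 0) = q(s, 0) = \vect{c}(s)$, so the two pieces glue smoothly along $v = 0$. For injectivity, observe that $p(s, v) = (x(s), \zeta(s), \zeta(s) - v)$ and $q(s, v) = (x(s), \zeta(s) - v, \zeta(s))$, where $x(s) = \int_0^s \sigma(w)\, dw$ is strictly increasing because $\sigma > 0$. Therefore $s$ is recovered from the first coordinate of $X(s, v)$, and $v$ is then recovered from either the second or third coordinate. Finally, the images $p(U_+) = P_-$ and $q(U_-) = P_+$ meet only along the crease $C = \vect{c}([0, L])$, which in each piece corresponds exactly to $v = 0$, so the gluing is consistent.

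The main obstacle is careful bookkeeping: selecting the correct branches so that $\sin\alpha_p > 0$ while $\sin\alpha_q < 0$, and verifying that the strict inequalities in \eqref{eq:angle} persist at the tips $s = 0$ and $s = L$, where $\zeta$ vanishes and the pillow box tapers to a point. Everything else reduces to the planar-curve calculation above together with the strict monotonicity of $x(s)$.
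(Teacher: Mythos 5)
Your proposal is correct and follows essentially the same route as the paper: compute the Frenet frame of the crease $\vect{c}$ (noting it is planar with binormal $(0,1,-1)/\sqrt{2}$), read off $\alpha$, $\check\alpha=-\alpha$, and a common $\beta$ from the decomposition of the rulings $\xi$, $\check\xi$, then invoke compactness plus injectivity to get the homeomorphism. Your explicit inversion $p(s,v)=(x(s),\zeta(s),\zeta(s)-v)$, $q(s,v)=(x(s),\zeta(s)-v,\zeta(s))$ spells out the injectivity step that the paper just asserts, and your remark that the constant rulings make both pieces generalized cylinders (so $K\equiv0$ is automatic) is a small but genuine shortcut; otherwise the argument matches the paper's.
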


\begin{proof}
By the definition of the map $X$ in \eqref{eq:pillow-origami},
the injectivity of $X$ can be shown directly.
Since $X$ is continuous and $U$ is compact, 
it follows that $X$ is a homeomorphism between $U$ and $P = X(U)$.
So, it suffices to show that $X$ is an origami-map.

By \eqref{eq:crease}, 
the unit tangent vector 
$T(s) = \vect{c}'(s)$ is written as
$T = (\sigma,\, \zeta',\, \zeta')$.
Since 
$T' = \zeta''(-2\, \zeta'/\sigma,\, 1,\, 1)$,
the curvature function $\kappa(s)$,
the principal normal $N(s)$,
and the binormal $B(s)$
are given by
\[
\kappa = -\sqrt{2}\, \zeta'' / \sigma,\quad
N = (2\, \zeta',\, -\sigma,\, -\sigma)/\sqrt{2},\quad
B = (0,\, 1,\,-1)/\sqrt{2}.
\]

The vector field 
\[
\vect{n}_g(s)
=
\bigl( \sigma(s) \zeta'(s),\, \zeta'(s)^2,\, -1 + \zeta'(s)^2 \bigr)/\sqrt{1 - \zeta'(s)^2},
\]
gives the unit conormal vector field 
on the developable surface $p(s,v)$
along the curve $\vect{c}(s)$.
Since (cf.\ \eqref{eq:normal-p})
$$
\cos\alpha(s) 
= \sigma(s)/\sqrt{2 - 2\, \zeta'(s)^2},\qquad
\sin\alpha(s) 
= 1/\sqrt{2 - 2\, \zeta'(s)^2},
$$
the first angular function $\alpha(s)$ of $p(s, v)$ is given by
$\alpha(s) = \arctan \bigl( 1 / \sigma(s) \bigr).$
The second angular function $\beta(s)$ of $p(s, v)$ satisfies
\begin{equation}\label{eq:beta-p}
\cos \beta(s) = \xi \cdot T(s) = - \zeta'(s), \quad
\sin \beta(s) = \xi \cdot \vect{n}_g(s) = \sqrt{1 - \zeta'(s)^2}.
\end{equation}
Similarly, since
$
\check{\vect{n}}_g
=
( \sigma \zeta',\, -1 + (\zeta')^2,\, (\zeta')^2 )/\sqrt{1 - (\zeta')^2}
$
is the conormal vector field on $q(s, v)$ along the curve $\vect{c}(s)$,
we obtain that
the first angular function $\check{\alpha}(s)$ of $q(s, v)$
is $\check{\alpha}(s) = -\alpha(s)$,
and the second angular function $\check{\beta}(s)$ of $q(s, v)$
coincides with $\beta(s)$.
This proves the assertion.
\end{proof}

\begin{definition}\label{def:F-data}
Let $b$ be a positive real number,
and let $\zeta(s)$ $(0 \leq s \leq L)$ 
be a smooth function satisfying 
\eqref{eq:zeta-prime} and \eqref{eq:boundary-condition}.
The pair $(b, \zeta(s))$
is called {\it fundamental data}.
\end{definition}

From the above discussion, 
every pillow box can be recovered
by fundamental data.

\begin{example}\label{ex:F-data}
We set $b := 1$, and
$\zeta(s) := \sqrt{2} - \sqrt{(s - 1)^2 + 1}$
$(s\in [0,2])$.
Then, the pair $(b, \zeta(s))$
satisfies \eqref{eq:zeta-prime} and \eqref{eq:boundary-condition}.
Hence, $(b, \zeta(s))$ is fundamental data.
Figures~\ref{fig:rectangle-pillow} and~\ref{fig:pillow-box}
exhibit the corresponding pillow box $M$.
\end{example}

Let $g$ be the flat metric on $U = U_+ \cup U_-$ defined by
\begin{equation}\label{eq:1ff-pillow}
  g := ds^2 - 2\, \zeta'(s)\, ds\, dv + dv^2.
\end{equation}
Since
$\| p_s \| = \| p_v \| = \| q_s \| = \| q_v \| = 1$
and 
$p_s \cdot p_v = q_s \cdot q_v = -\zeta'(s)$,
the first fundamental forms $g_p$ and $g_q$ of
the developable surfaces $p(s, v)$ and $q(s, v)$
given in \eqref{eq:pq}
coincide with $g$.

%%%%%%%%%%%%%%%%%%%%%%%%%%%%%%%%%%
%%%%%%%%%%%%%%%%%%%%%%%%%%%%%%%%%%
\subsection{Double rectangle}
\label{sec:Developing-maps}

We use the notations in Subsection~\ref{sec:pillow-box-origami}.
Let $\zeta(s)$ $(0 \leq s \leq L)$ be the function given in \eqref{eq:crease}.
We define a curve
$\gamma(s)$ $(0 \leq s \leq L)$
in the $xy$-plane $(z = 0)$ by
\begin{equation}\label{eq:crease-pattern}
  \gamma(s) := \left( \int_{0}^s \sqrt{1 - \zeta'(w)^2}\, dw,\; \zeta(s),\, 0 \right).
\end{equation}
We define $Y : U \to\R^3$ as
\begin{equation}\label{eq:Y}
  Y(s, v) := \gamma(s) + v\, \check{\xi},
\end{equation}
where 
$U$ is a domain as in \eqref{eq:D}, and
$\xi$ is the vector defined in \eqref{eq:xi-xi-hat}.
Then, $Y$ is isometric to $X$.
Namely, since
$\| Y_s \| = \| Y_v \| = 1$
and
$Y_s \cdot Y_v = -\zeta'(s)$,
the first fundamental form $g_Y$ of $Y$ coincides with 
the flat metric $g$ defined in \eqref{eq:1ff-pillow}.
Since $Y$ is a diffeomorphism between $U$ and 
a closed rectangle $\Omega:=Y(U)$ given by
\[
\Omega
= \{ (x, y, 0) \mid 0 \leq x \leq 2a,\; 0 \leq y \leq b \}
\quad
\left( a := \frac{1}{2} \int_{0}^L \sqrt{1 - \zeta'(s)^2}\, ds \right),
\]
the map $Y$ can be regarded as a developing map 
of the quarter domain 
$P$ of the pillow box $M$ given in \eqref{eq:M-entire}.
Since the curve $\gamma(s)$ in $\Omega$
corresponds to the space curve $\vect{c}(s)$,
the plane curve $\gamma(s)$ is 
the crease pattern of $P$ as a curved folding.

\begin{remark}\label{rem:slope1}
By \eqref{eq:zeta-prime} and \eqref{eq:crease-pattern}, 
we observe that
the absolute value of the slope $|dy/dx| = |\zeta'(s)| / \sqrt{1 - \zeta'(s)^2}$ 
of the crease pattern $\gamma(s)$
is less than $1$
(cf.\ the condition~(II)  
in Subsection~\ref{sec:folding-rectangles}).
\end{remark}

The entire pillow box $M$ is recovered 
by taking the union of the reflections as in \eqref{eq:M-entire}.
The entire rectangle can also be restored 
by taking the union of the reflections in the same way.
We denote by $\rho_V$ and $\rho_H$
the reflections with respect to the planes $y = b$ and $z = 0$,
as in \eqref{eq:reflection}.
The union 
\[
R
= \Omega \cup \rho_V(\Omega)
= \{ (x, y, 0) \mid 0 \leq x \leq 2a,\; 0 \leq y \leq 2b \}
\]
is a closed rectangle
with side lengths $2a$ and $2b$.

We set the reflection image $R' := \rho_H(R)$ of $R$ 
with respect to the $xy$-plane.
Then $R'$ coincides with $R$, namely $R'$ is a copy of $R$.

\begin{definition}\label{def:double-rectangle}
Let $\sim$ be the relation on $R \cup R'$
that identifies the corresponding points 
on the four sides of the rectangles $R$ and $R'$.
The quotient space 
\[
\tilde{R} := (R \cup R') / \!\sim
\]
is called a {\it double rectangle}.
\end{definition}
Such a double rectangle $\tilde{R}$ is homeomorphic to the $2$-sphere $S^2$,
and is isometric to the pillow box $M$.

%%%%%%%%%%%%%%%%%%%%%%%%%%%%%%%%%%
%%%%%%%%%%%%%%%%%%%%%%%%%%%%%%%%%%
\subsection{Folding rectangles to make pillow boxes}
\label{sec:folding-rectangles}

Here we exhibit a procedure for constructing a pillow box from a double rectangle.

Consider a rectangle $R = \mathrm{ABCD}$ in the Euclidean plane $\R^2$
(see Figure~\ref{fig:R-curve}).
Here, a rectangle means the closed one,
that is, it is the union of its interior and its boundary.
Suppose that the lengths of the sides satisfy $\mathrm{AB} = \mathrm{CD} = 2a$
and $\mathrm{BC} = \mathrm{DA} = 2b$,
where $a, b$ are positive numbers.
Let $R' = \mathrm{ABCD}$ be a copy of $R$.

\begin{figure}[htb]
\centering
 \begin{tabular}{c@{\hspace{7mm}}c}
  \resizebox{4cm}{!}{\includegraphics{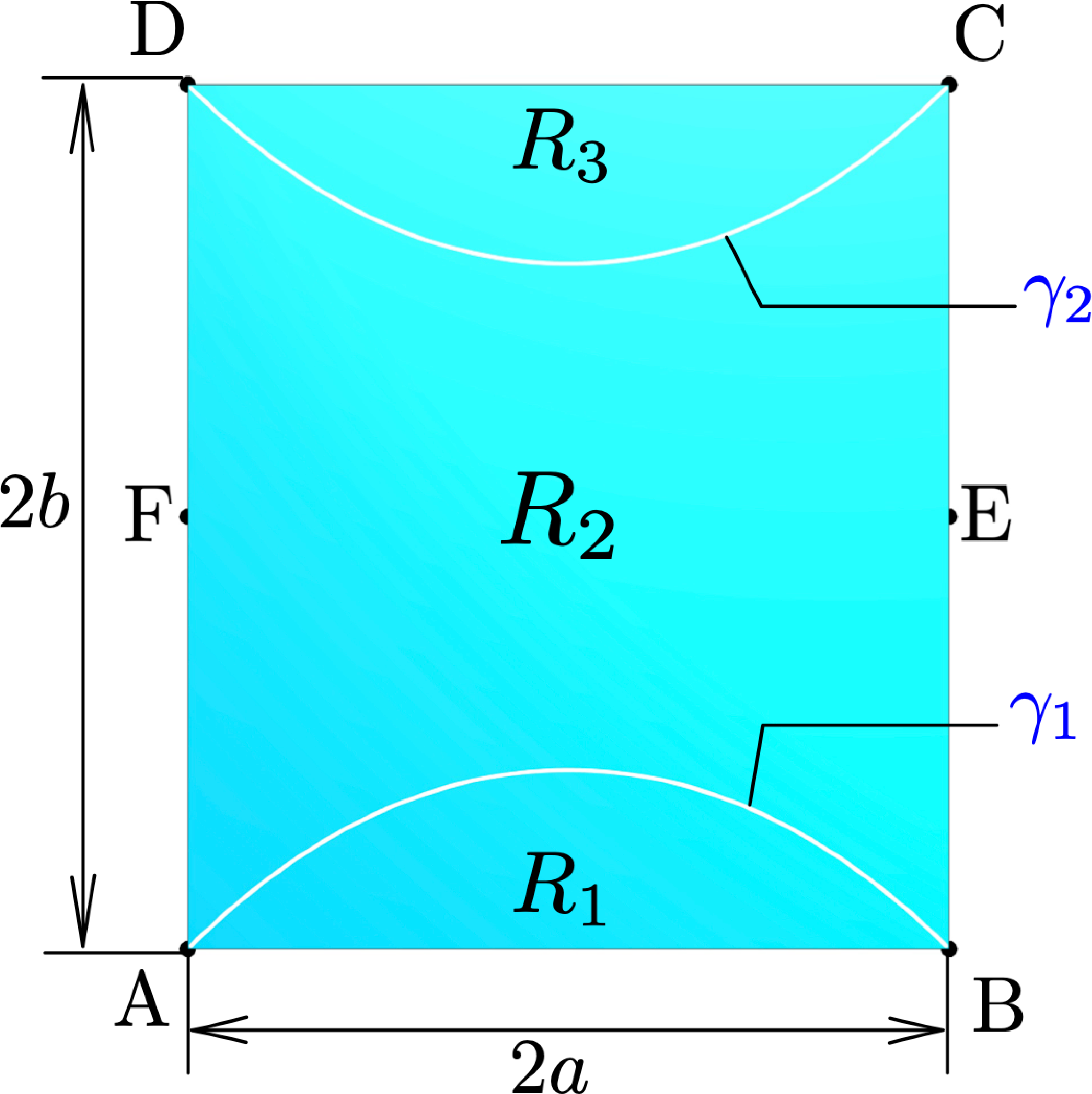}} &
  \resizebox{4cm}{!}{\includegraphics{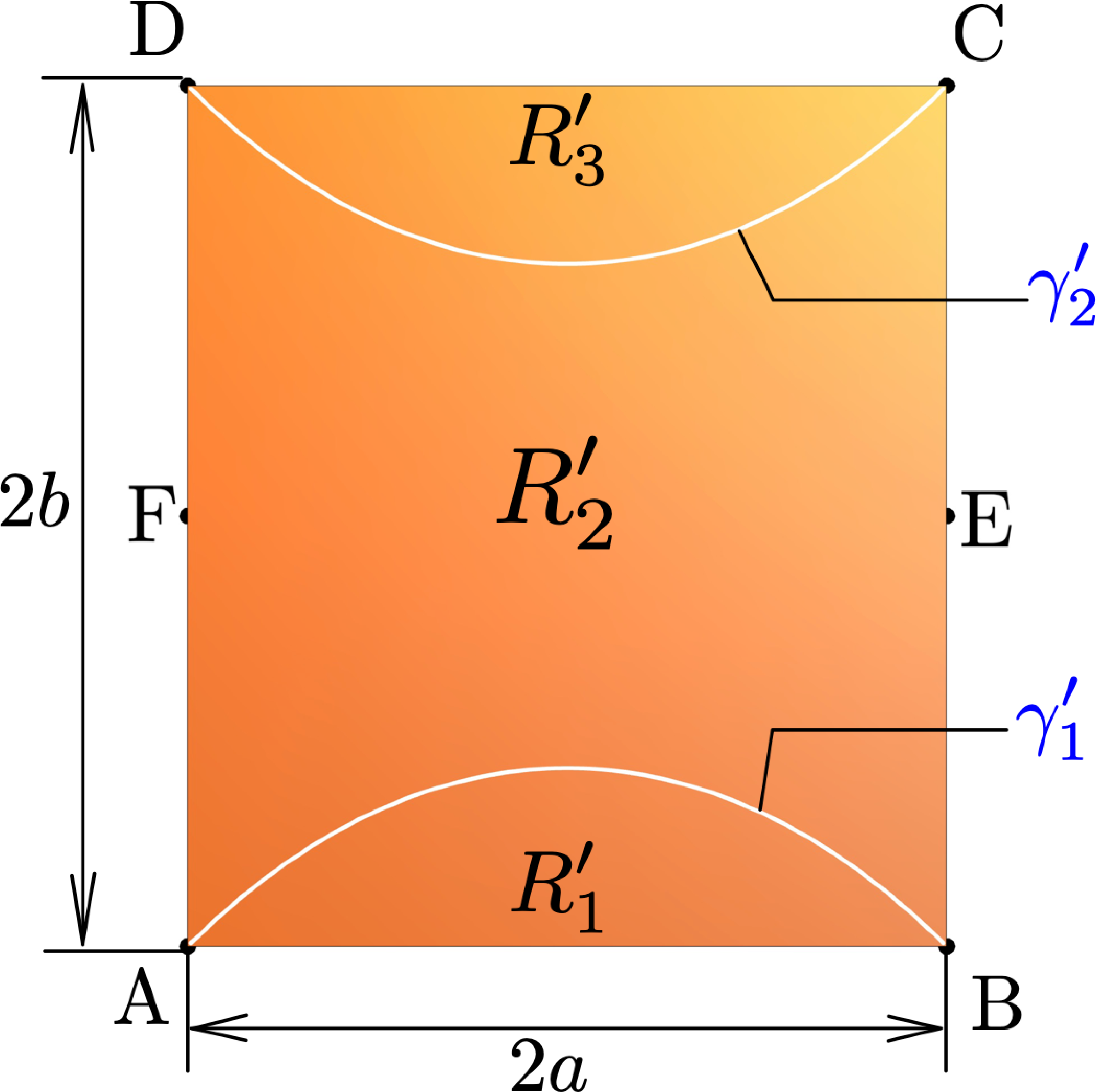}}
 \end{tabular}
 \caption{Regular curves $\gamma_1, \gamma_2$ in 
the rectangle $R = R_1 \cup R_2 \cup R_3$
and $\gamma_1', \gamma_2'$ in 
the other rectangle $R' = R_1' \cup R_2' \cup R_3'$.}
\label{fig:R-curve}
\end{figure}

Let $\mathrm{E}$ and $\mathrm{F}$ be the midpoints of
$\mathrm{BC}$ and $\mathrm{DA}$, respectively. 
Consider smooth curves $\gamma_1$ and $\gamma_2$ in $R$
(see Figure~\ref{fig:R-curve})
which satisfy the following four conditions:
\begin{itemize}
\item[(I)] The image of the curve $\gamma_1$ 
is given by a graph over the segment $\mathrm{AB}$,
which includes $\mathrm{A}$ and $\mathrm{B}$ as
the endpoints of $\gamma_1$.
\item[(II)] 
The absolute value of the slope
of the image of $\gamma_1$
is less than $1$ (cf.\ Remark~\ref{rem:slope1}).
\item[(III)] 
The curvature of $\gamma_1$ does not vanish.
\item[(IV)]
The image of $\gamma_1$ is contained in 
the rectangle $\mathrm{ABEF}$,
$\gamma_1$ and $\gamma_2$ do not intersect, 
and 
$\gamma_2$ coincides with the reflection of $\gamma_1$
with respect to the line $\mathrm{EF}$.
\end{itemize}

Let $\gamma_1'$ and $\gamma_2'$
be smooth curves in $R'$ 
which are copies of $\gamma_1$ and $\gamma_2$, respectively.
We divide $R$ (resp.\ $R'$) 
into the three closed domains $R_1$, $R_2$, and $R_3$
(resp.\ $R_1'$, $R_2'$, and $R_3'$)
by $\gamma_1$ and $\gamma_2$ 
(resp.\ $\gamma_1'$ and $\gamma_2'$)
as in Figure~\ref{fig:R-curve}.

Then, the constructive definition of pillow boxes
is stated as follows.
(See Definition~\ref{def:pillow-box} for the formal definition.)
Regarding $R$ and $R'$ as sheets of paper,
we fold the rectangle $R$ into a mountain shape $S$ along 
$\gamma_1$ and $\gamma_2$,
and fold $R'$ into a valley shape $S'$ along 
$\gamma_1'$ and $\gamma_2'$.
Connecting them along the boundaries, we obtain a box
$M = S \cup S'$
(see Figure~\ref{fig:pillow-box}).
For $i = 1, 2, 3$, let $S_i$ and $S_i'$ 
be the closed domains 
corresponding to $R_i$ and $R_i'$,
respectively.
If $S_1 \cup S_1'$, $S_3 \cup S_3'$, $S_2$, and $S_2'$
are all subsets of cylinders,
then $M$ is a pillow box.
Here, a {\it cylinder} 
means a connected regular surface in $\R^3$ 
given by the product of a regular plane curve and 
an entire straight line.

\begin{figure}[htb]
\centering
 \begin{tabular}{c}
  \resizebox{5cm}{!}{\includegraphics{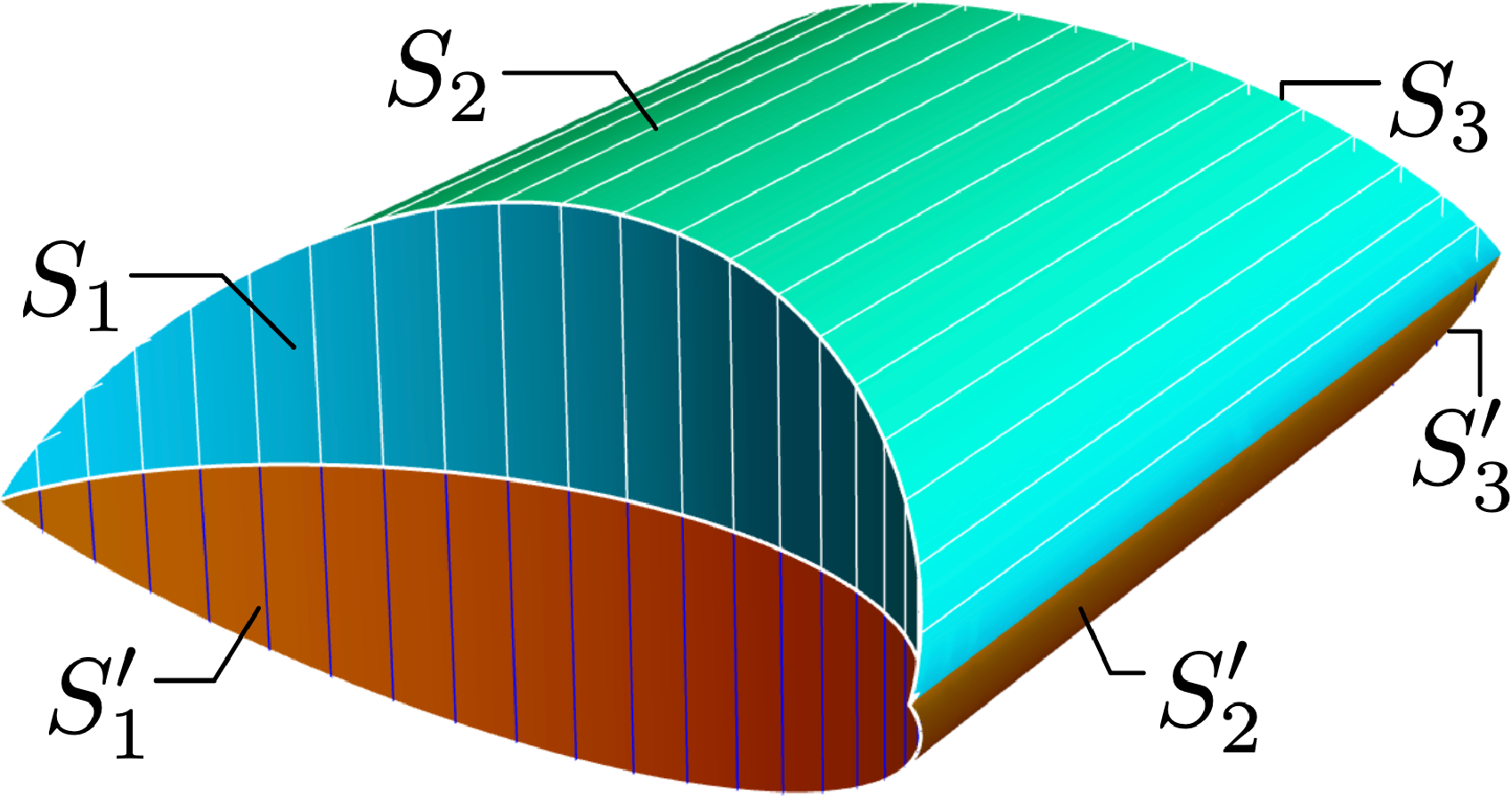}}
 \end{tabular}
 \caption{A pillow box 
 is a union of subsets of cylinders
 foliated by line segments.}
\label{fig:pillow-box}
\end{figure}

The explicit construction method for creating 
a pillow box from a double rectangle is given as follows:
By parallel translation, we may assume that 
\[
\mathrm{A} = (0, 0), \quad
\mathrm{B} = (2a, 0), \quad
\mathrm{C} = (2a, 2b), \quad
\mathrm{D} = (0, 2b).
\]
We denote by $\gamma_1(s) = (\varphi_1(s),\, \zeta_1(s))$ $(s \in [0, L])$
the arc-length parametrization of the curve $\gamma_1$ 
in the rectangle $R = \mathrm{ABCD}$ 
that satisfies conditions (I)--(IV) above,
where $L$ is the length of $\gamma_1$.
By conditions (I)--(IV), we have
$\zeta_1(0) = \zeta_1(L) = 0$
and
\[
1 - 2\, \zeta_1'(s)^2 > 0,\quad
\zeta_1''(s) < 0,\quad
0 < \zeta_1(s) < b
\quad (0 < s < L).
\]
Thus, the pair $(b, \zeta_1(s))$ satisfies 
\eqref{eq:zeta-prime} and \eqref{eq:boundary-condition}.
Namely, $(b, \zeta_1(s))$ is the fundamental data of 
a pillow box $M$ (cf.\ Definition~\ref{def:F-data}).

%%%%%%%%%%%%%%%%%%%%%%%%%%%%%%%%%%
%%%%%%%%%%%%%%%%%%%%%%%%%%%%%%%%%%
%%%%%%%%%%%%%%%%%%%%%%%%%%%%%%%%%%
%%%%%%%%%%%%%%%%%%%%%%%%%%%%%%%%%%
\section{Origami deformations of pillow boxes}
\label{sec:origami-deformation}

In the previous section, 
we observed that every pillow box is isometric to a double rectangle
given by its developing image.
Therefore, it is natural to consider the existence of isometric deformations
between them.
One example of such an isometric deformation is given as follows:  
Let $M$ be a pillow box defined by 
the fundamental data $(b, \zeta(s))$.
The corresponding crease pattern $\gamma(s)$
is given by \eqref{eq:crease-pattern}.
We write its graph form as 
$\gamma(x) = (x, \psi(x))$ for $0 \leq x \leq 2a$.  
For each $t \in [0, 1)$, define
\[
\gamma^t(x) := \big(x,\, (1 - t)\psi(x)\big)
\quad (0 \leq x \leq 2a).
\]
Since $\gamma(x)$ satisfies conditions (I)--(IV)  
in Subsection~\ref{sec:folding-rectangles},  
so does $\gamma^t(x)$.  
And hence, $\gamma^t(x)$ is a crease pattern
of a pillow box $\bar{M}^t$
as described in Subsection~\ref{sec:folding-rectangles}.
By construction, the family $\{\bar{M}^t\}$  
shares a common double rectangle $\tilde{R}$  
and thus defines an isometric deformation  
from $M = \bar{M}^0$ to $\tilde{R} = \lim_{t \to 1} \bar{M}^t$  
(see Figure~\ref{fig:pillow-0-deformation-long}).

\begin{figure}[htb]
\begin{center}
 \begin{tabular}{c@{\hspace{4mm}}c@{\hspace{4mm}}c}
  \resizebox{4cm}{!}{\includegraphics{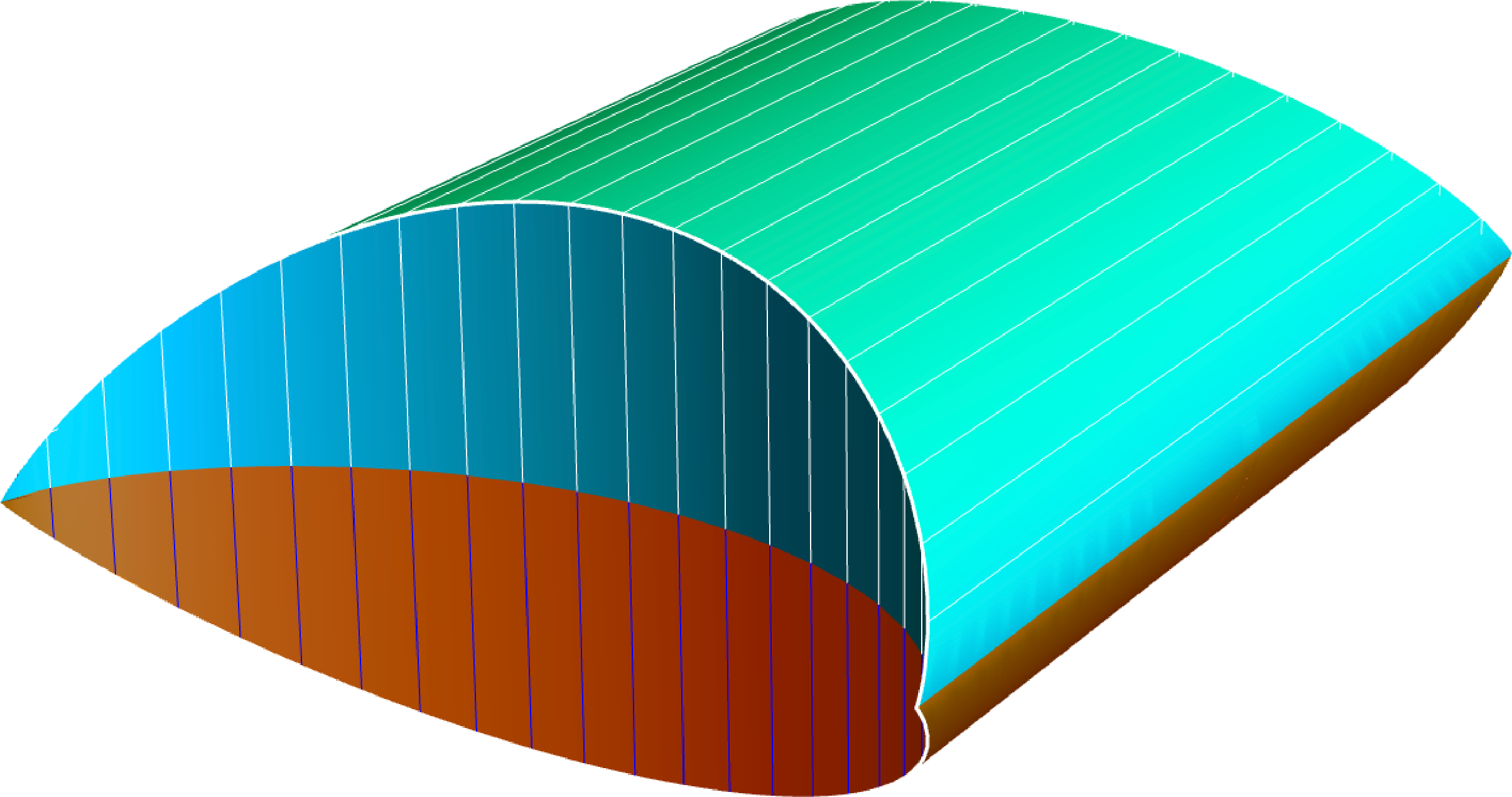}}&
  \resizebox{4cm}{!}{\includegraphics{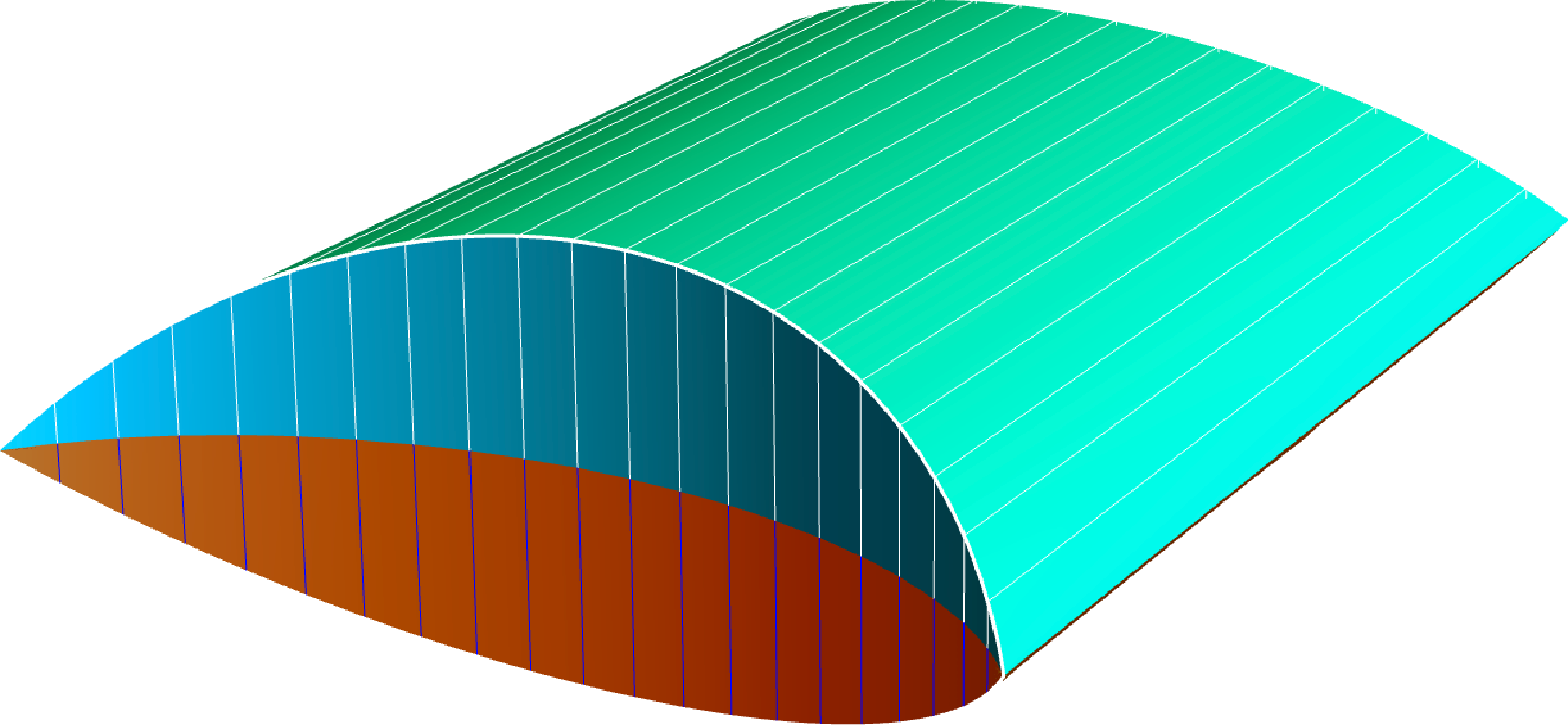}}&
  \resizebox{4cm}{!}{\includegraphics{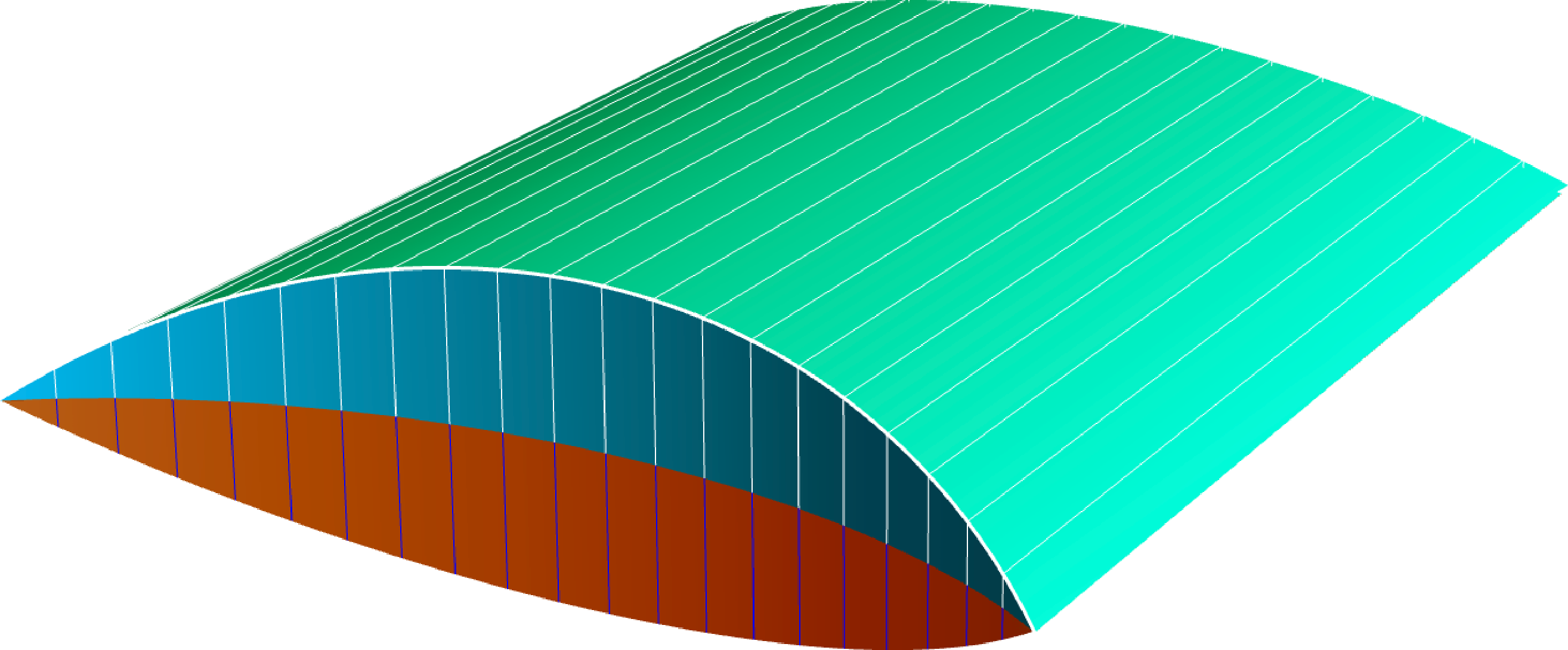}}\\
\vspace{0.5cm}
  {\footnotesize $M = \bar{M}^0$}&
  {\footnotesize $\bar{M}^{0.2}$}&
  {\footnotesize $\bar{M}^{0.4}$}\\
  \resizebox{4cm}{!}{\includegraphics{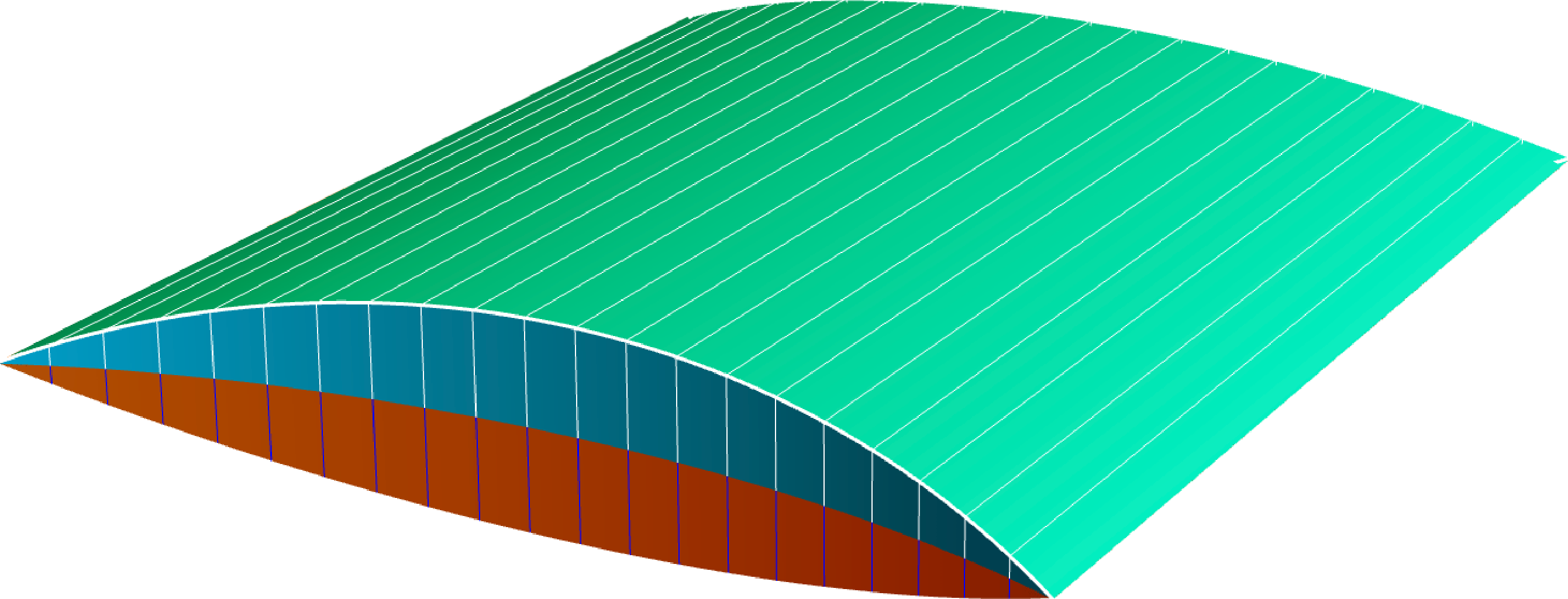}}&
  \resizebox{4cm}{!}{\includegraphics{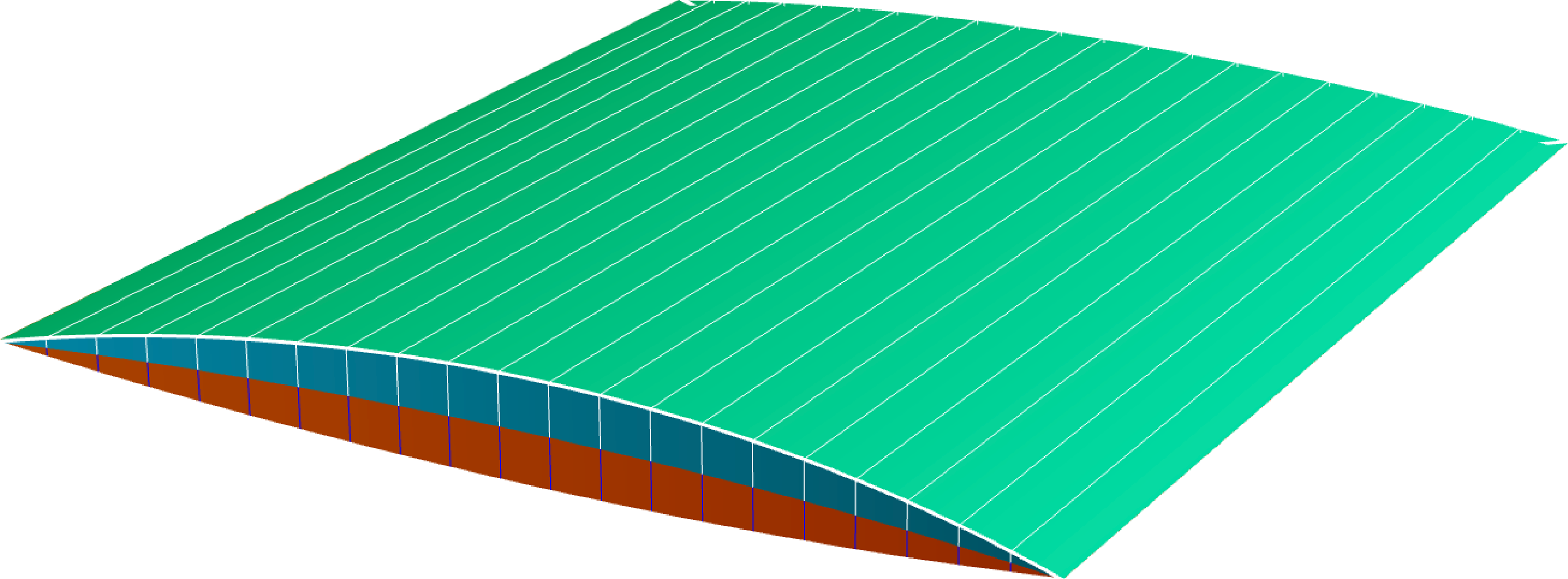}}&
  \resizebox{4cm}{!}{\includegraphics{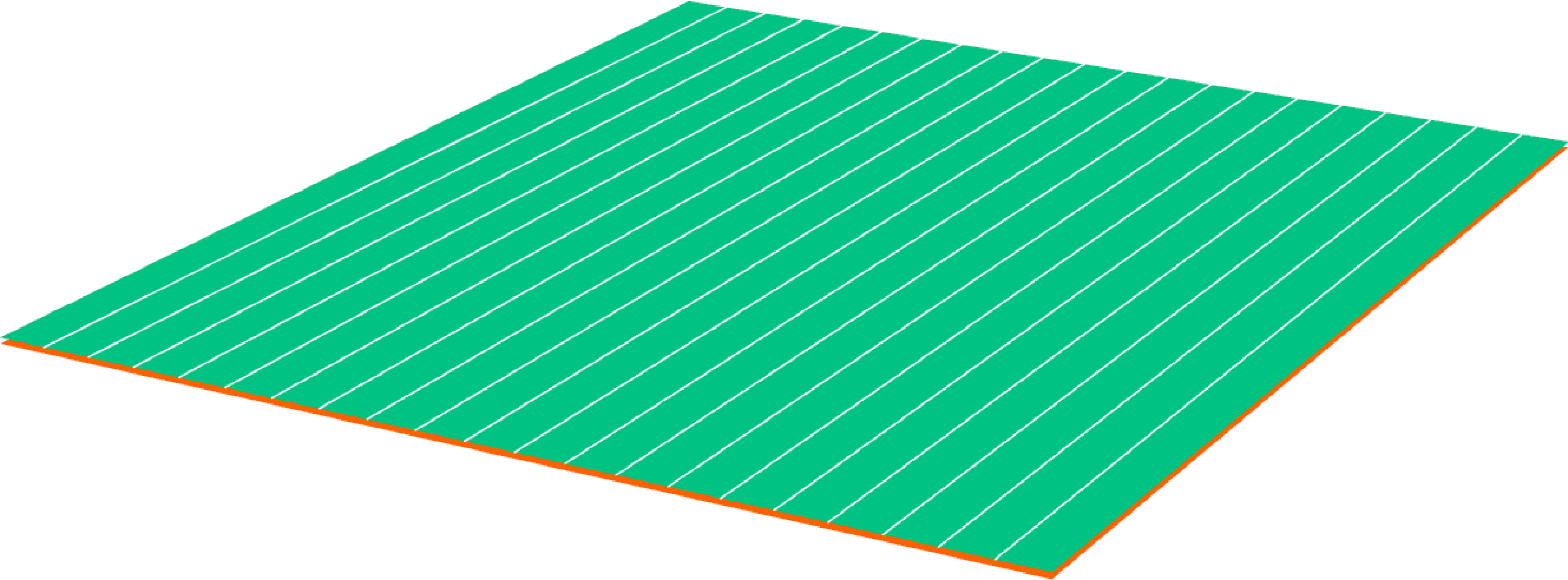}}\\
  {\footnotesize $\bar{M}^{0.6}$}&
  {\footnotesize $\bar{M}^{0.8}$}&
  {\footnotesize $\tilde{R} = \lim_{t \to 1} \bar{M}^t$}
 \end{tabular}
\end{center}
 \caption{The family $\{ \bar{M}^t \}_{t \in [0, 1]}$ of 
 pillow boxes that 
 gives an isometric deformation from $M = \bar{M}^0$
 to the corresponding double rectangle $\tilde{R} = \lim_{t \to 1} \bar{M}^t$.
 Note that the family $\{ \bar{M}^t \}_{t \in [0, 1]}$ changes the crease pattern.}
\label{fig:pillow-0-deformation-long}
\end{figure}

This deformation changes the crease pattern $\gamma$.
However, in practical applications, 
isometric deformations that preserve the crease pattern are desirable.
In this section, we define {\it origami deformations},
which are isometric deformations from pillow boxes to double rectangles 
among curved foldings that preserve the crease pattern
(Definition~\ref{def:origami-deformation}).
In Theorem~\ref{thm:Koiso-deformation}, we derive a representation formula 
for quarter origami deformations, 
which yields an explicit classification of origami deformations of pillow boxes.
As a corollary, 
we prove that such an origami deformation 
necessarily changes the topology of a pillow box
(Corollary~\ref{cor:isometric-deformation}).

%%%%%%%%%%%%%%%%%%%%%%%%%%%%%%%%%%
%%%%%%%%%%%%%%%%%%%%%%%%%%%%%%%%%%
\subsection{Origami deformations}

To define an origami deformation (Definition~\ref{def:origami-deformation}), 
we first define a {\it quarter origami deformation}
(Definition~\ref{def:Q-origami-deformation}),
which is an isometric deformation from a quarter domain of a pillow box 
to a rectangle among curved foldings 
that preserve the crease pattern.
In the following, 
we use the notations in Subsections~\ref{sec:pillow-box-origami} 
and~\ref{sec:Developing-maps}.

\begin{definition}\label{def:Q-origami-deformation}
Let $M$ be a pillow box
given by fundamental data $(b, \zeta(s), 0\leq s\leq L)$
(cf.\ Definition~\ref{def:F-data}).
Consider a continuous 
$1$-parameter family $\{ X^t : U \to\R^3 \}_{t \in [0,1]}$
of origami maps,
described as
\begin{equation}\label{eq:Xt}
X^t(s, v) =
\begin{cases}
p^t(s, v) = \vect{c}^t(s) + v\, \xi^t(s) & ((s, v) \in U_+), \\[6pt]
q^t(s, v) = \vect{c}^t(s) + v\, \check{\xi} & ((s, v) \in U_-), 
\end{cases}
\end{equation}
for each $t \in [0,1]$,
where
$\check{\xi}$, $U_+$ and $U_-$
are defined by 
\eqref{eq:xi-xi-hat}, 
\eqref{eq:D+}, 
\eqref{eq:D-},
respectively.
Suppose that the following hold:
\begin{itemize}
\item[(1)]
$X^0 = X$ and $X^1 = Y$ hold,
where $X$ and $Y$ are defined by 
\eqref{eq:pillow-origami}
and~\eqref{eq:Y},
respectively.
\item[(2)]
For each $t \in [0,1]$,
$X^t$ is isometric to both $X$ and $Y$.
Namely, 
the first fundamental forms $g_{p^t}$ and $g_{q^t}$ of  
$p^t$ and $q^t$ coincide with $g$ on $U_+$ and $U_-$,
respectively,
where $g$ is the common first fundamental form of $X$ and $Y$
given in \eqref{eq:1ff-pillow}.
\item[(3)] 
The endpoints $\vect{c}^t(0)$ and $\vect{c}^t(L)$
of the crease of the origami map $X^t$
lie on the $x$-axis for each $t \in [0,1]$.
\end{itemize}
Then $\{ X^t : U \to\R^3 \}_{t \in [0,1]}$
is called a {\it quarter origami deformation}
from $X$ to $Y$.
Furthermore, we refer to $\Phi^t(s) := X^t(s, \zeta(s)-b)$ as the {\it vertical end}, 
and $\Psi^t(s) := X^t(s, \zeta(s))$ as the {\it horizontal end}.
\end{definition}

Condition (3) is necessary to make the pillow box closed.
The following proposition implies that 
every quarter origami deformation can be reflected 
to obtain the entire image of the origami maps.

\begin{proposition}\label{prop:upper-end}
Let $\{ X^t : U \to\R^3 \}_{t \in [0,1]}$ be a
quarter origami deformation
from $X$ to $Y$.
Then, for each $t \in [0,1]$, 
the vertical end $\Phi^t(s)$ lies in the vertical plane $y = b$.
In particular, the curved folding
$P^t := X^t(U)$ and its reflection image $\rho_{V}(P^t)$ are connected smoothly,
where $\rho_V$ is the reflection with respect to 
the vertical plane $V_M$ given in \eqref{eq:reflection}.
\end{proposition}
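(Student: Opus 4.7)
The plan is to extract from the isometric condition on $q^t$ a first-order ODE for the $y$-component of the crease $\vect{c}^t(s)$ whose initial value is fixed by condition~(3), and then to read off $\Phi^t(s)$ by direct computation.

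Since $\check{\xi} = (0,-1,0)$ is a unit vector and $q^t(s,v) = \vect{c}^t(s) + v\check{\xi}$, the first fundamental form of $q^t$ is
\[
g_{q^t} = \|(\vect{c}^t)'(s)\|^2\, ds^2 + 2\,(\vect{c}^t)'(s)\cdot\check{\xi}\, ds\, dv + dv^2.
\]
Matching this with the prescribed metric $g = ds^2 - 2\zeta'(s)\,ds\,dv + dv^2$ from \eqref{eq:1ff-pillow} (condition~(2)) and comparing the $ds\,dv$ coefficient yields $(\vect{c}^t)'(s)\cdot\check{\xi} = -\zeta'(s)$. Writing $\vect{c}^t(s) = (x^t(s), y^t(s), z^t(s))$, this is $(y^t)'(s) = \zeta'(s)$, so $y^t(s) = \zeta(s) + C_t$ for some constant $C_t$ depending only on $t$. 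Condition~(3) forces $y^t(0) = 0$; combined with $\zeta(0) = 0$ from \eqref{eq:boundary-condition}, we get $C_t = 0$, hence $y^t(s) = \zeta(s)$ on $[0, L]$.

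With $y^t(s) = \zeta(s)$ established, the vertical end is
\[
\Phi^t(s) = \vect{c}^t(s) + (\zeta(s) - b)\check{\xi} = \bigl(x^t(s),\; \zeta(s) - (\zeta(s) - b),\; z^t(s)\bigr) = (x^t(s),\, b,\, z^t(s)),
\]
which lies in $V_M = \{y = b\}$. For the smooth gluing, each ruling $\{\vect{c}^t(s) + v\check{\xi} : \zeta(s) - b \le v \le 0\}$ of $q^t$ is a line segment in the direction $\check{\xi}$, which is perpendicular to $V_M$, and meets $V_M$ precisely at its endpoint $\Phi^t(s)$. Applying $\rho_V$ extends each such segment collinearly to the opposite side of $V_M$, so $P^t$ and $\rho_V(P^t)$ share the boundary curve $\Phi^t$ and their rulings concatenate into single straight segments crossing $V_M$ transversally; this yields the claimed smooth connection along $\Phi^t$.

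The substantive step is the extraction of $y^t(s) = \zeta(s)$: the isometric condition on $q^t$ alone suffices, because the constant ruling direction $\check{\xi}$ makes the $ds\,dv$ coefficient of $g_{q^t}$ precisely record the $y$-component of the crease. The potential obstacle —- smoothness of the glued surface across $V_M$ -— is then essentially automatic, because $\check{\xi}$ is orthogonal to the reflection plane $V_M$, so no separate tangent-plane compatibility computation is needed; no information about $p^t$ enters the argument, since $\Phi^t$ lies entirely in the image of $q^t$.
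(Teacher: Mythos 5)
Your proposal is correct and follows essentially the same route as the paper: the paper's Lemma~\ref{lem:y} extracts $y^t(s)=\zeta(s)$ from the $ds\,dv$ coefficient of $g_{q^t}$ pinned down by condition~(3), and then the proof of Proposition~\ref{prop:upper-end} computes $\Phi^t(s)=(x^t(s),b,z^t(s))$ and observes that $X^t(U_-)$ and $\rho_V\circ X^t(U_-)$ lie in a single cylinder $\{(x^t(s),y,z^t(s))\mid s\in[0,L],\ y\in\R\}$; your ``rulings concatenate collinearly across $V_M$'' argument is a ruling-by-ruling restatement of that same cylinder observation.
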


To prove this proposition,
we prepare the following lemma.

\begin{lemma}\label{lem:y}
Let $\{ X^t : U \to\R^3 \}_{t \in [0,1]}$ be a
quarter origami deformation
from $X$ to $Y$.
Write $\vect{c}^t(s) = (x^t(s),\, y^t(s),\, z^t(s))$.
Then,
$\vect{c}^t : [0, L] \to \R^3$ is parametrized by arc length,
and $y^t(s) = \zeta(s)$ holds for each $t \in [0,1]$.
\end{lemma}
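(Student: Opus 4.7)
The plan is to extract everything from condition (2) on the $U_-$ side, where the origami map has the simple form $q^t(s, v) = \vect{c}^t(s) + v\, \check{\xi}$ with the fixed ruling direction $\check{\xi} = (0, -1, 0)$. Because $\check{\xi}$ does not depend on $s$ or $t$, the first fundamental form of $q^t$ is expressible directly in terms of $(\vect{c}^t)'(s)$, and matching it to the prescribed metric $g = ds^2 - 2\zeta'(s)\, ds\, dv + dv^2$ gives pointwise constraints on $\vect{c}^t$.

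First I would compute $q^t_s = (\vect{c}^t)'(s)$ and $q^t_v = \check{\xi}$. The identity $\|q^t_v\|^2 = 1$ is automatic. The coefficient of $ds^2$ in $g_{q^t}$ is $\|(\vect{c}^t)'(s)\|^2$, which must equal $1$; this gives that $\vect{c}^t$ is parametrized by arc length. The cross coefficient yields $(\vect{c}^t)'(s) \cdot \check{\xi} = -\zeta'(s)$, i.e.\ $(y^t)'(s) = \zeta'(s)$, so that $y^t(s) = \zeta(s) + C_t$ for some constant $C_t$ depending only on $t$.

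To pin down $C_t = 0$ I would invoke condition (3) of Definition~\ref{def:Q-origami-deformation}: the endpoint $\vect{c}^t(0)$ lies on the $x$-axis, so $y^t(0) = 0$. Combined with $\zeta(0) = 0$ from \eqref{eq:boundary-condition}, this forces $C_t = 0$ and hence $y^t(s) = \zeta(s)$ for all $s \in [0, L]$ and all $t \in [0, 1]$.

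There is essentially no main obstacle here --- the lemma is a direct consequence of reading off the isometry condition on $U_-$ where the ruling direction is both $t$-independent and perpendicular to the $xz$-plane. The only subtle point is remembering to use condition (3) to fix the constant of integration; without it one would only conclude $y^t(s) - \zeta(s)$ is constant in $s$, not zero. It would also be worth noting (for use downstream, though not strictly needed for this lemma) that the corresponding computation on $U_+$ is not automatic because $\xi^t(s)$ there depends on $s$ and $t$.
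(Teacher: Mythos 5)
Your proof is correct and follows essentially the same route as the paper: read off the coefficients of the first fundamental form of $q^t$ on $U_-$ where the ruling $\check{\xi}$ is constant, obtain arc-length parametrization from the $ds^2$ coefficient and $(y^t)'=\zeta'$ from the cross term, then kill the integration constant via condition (3). The only cosmetic difference is that you use the endpoint $\vect{c}^t(0)$ while the paper uses $\vect{c}^t(L)$; both work since $\zeta(0)=\zeta(L)=0$.
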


\begin{proof}
The first fundamental form $g_{q^t}$ of $q^t(s, v)$ is written as
\[
g_{q^t} = \| (\vect{c}^t)'(s) \|^2\, ds^2 - 2\, (y^t)'(s)\, ds\, dv + dv^2.
\]
By (2) of Definition~\ref{def:Q-origami-deformation},
we have 
$\| (\vect{c}^t)'(s) \| = 1$
and 
$(y^t)'(s) = \zeta'(s)$.
The first equation yields the former assertion.
By the second equation, there exists a function $b(t)$ such that 
$y^t(s) = \zeta(s) + b(t)$.
Condition (3) implies that $y^t(L) = 0$ for each $t \in [0,1]$.
Since $\zeta(L) = 0$, the function $b(t)$ must be identically zero.
Hence, the latter assertion follows.
\end{proof}

\begin{proof}[Proof of Proposition~\ref{prop:upper-end}]
We fix $t\in [0,1]$ arbitrarily.
By Lemma~\ref{lem:y},
\[
\Phi^t(s)
=
X^t(s, \zeta(s) - b)
=
q^t(s, \zeta(s) - b)
=
(x^t(s),\, b,\, z^t(s))
\]
holds.
Hence, the vertical end $\Phi^t(s)$
is included in the vertical plane $\{y=b\}$.
Moreover,
since both $X^t(U_-)$ and $\rho_{V}\circ X^t(U_-)$ 
are subsets of a single cylinder 
$\{(x^t(s),y,z^t(s)) \in \R^3 \mid s\in [0,L],\,y\in \R\}$,
we obtain the assertion.
\end{proof}

Let $\{ X^t : U \to\R^3 \}_{t \in [0,1]}$ 
be a quarter origami deformation 
as in Definition~\ref{def:Q-origami-deformation}.
By condition (3) in Definition~\ref{def:Q-origami-deformation},
the curved folding $P^t := X^t(U)$ 
and its reflection image $\rho_{H}(P^t)$ are connected
continuously for any $t \in [0,1]$,
where $\rho_H$ is the reflection with respect to 
the horizontal plane $V_H$ given in \eqref{eq:reflection}.
Thus, we obtain the following definition:

\begin{definition}\label{def:origami-deformation}
We set $M^t$ as the union
\[
M^t := P^t \cup \rho_{V}(P^t) \cup \rho_{H}(P^t) \cup \rho_{H} \circ \rho_{V}(P^t).
\]
We call $\{ M^t \}_{t \in [0,1]}$
an {\it origami deformation} from 
a pillow box $M$ to a double rectangle $\tilde{R}$.
\end{definition}

%%%%%%%%%%%%%%%%%%%%%%%%%%%%%%%%%%
%%%%%%%%%%%%%%%%%%%%%%%%%%%%%%%%%%
%%%%%%%%%%%%%%%%%%%%%%%%%%%%%%%%%%
%%%%%%%%%%%%%%%%%%%%%%%%%%%%%%%%%%
\subsection{Classification of origami deformations}

We derive the following representation formula 
for quarter origami deformations,
which yields an explicit classification of origami deformations.

\begin{theorem}\label{thm:Koiso-deformation}
Let $M$ be a pillow box
given by fundamental data $(b, \zeta(s), 0\leq s\leq L)$.
Denote by $X : U \to\R^3$ an origami parametrization 
of a quarter domain $P$ of $M$,
with developing map $Y : U \to\R^3$.
Suppose there exist continuous functions 
$\lambda(t)$ and $\mu(t)$ $(t \in [0,1])$
such that
$$
\mu(0) = \mu(1) = \lambda(1) = 0,\quad
\lambda(0) = 1,\quad
( 1 + \lambda(t)^2 )\zeta'(s)^2 < 1
$$
hold for every $t\in [0,1]$ and $s\in (0,L)$.
Then, define $X^t : U \to\R^3$ for each $t \in [0,1]$ by
\[
 X^t(s, v) :=
 \begin{cases} 
 p^t(s, v) = \vect{c}^t(s) + v\,\xi^t & ((s,v) \in U_+),\\
 q^t(s, v) = \vect{c}^t(s) + v\,\check{\xi} & ((s,v) \in U_-),
 \end{cases}
\]
where $\check{\xi} = (0,-1,0)$, and
\begin{align}
\label{eq:crease-t}
\vect{c}^t(s) 
&= 
\left(
\int_{0}^{s} \sqrt{1 - (1 + \lambda(t)^2) \zeta'(w)^2} \, dw + \mu(t),\;
\zeta(s),\; \lambda(t)\, \zeta(s)
\right),\\
\label{eq:xi-check-t}
\xi^t
&= \frac{1}{1 + \lambda(t)^2} \left( 0,\; \lambda(t)^2-1,\; -2\lambda(t) \right).
\end{align}
Then, the family $\{ X^t : U \to\R^3 \}_{t \in [0,1]}$ is a quarter origami deformation
from $X$ to $Y$.
Conversely, 
every quarter origami deformation
$\{ X^t : U \to\R^3 \}_{t \in [0,1]}$
is given in this form.
\end{theorem}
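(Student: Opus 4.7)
The plan is to prove the theorem in two parts: a direct sufficiency computation verifying that the formulas \eqref{eq:crease-t}--\eqref{eq:xi-check-t} yield a quarter origami deformation, and a structural necessity argument showing that every quarter origami deformation is rigid enough to be captured by two scalar functions $\lambda(t),\mu(t)$. The unifying observation, extracted from both directions, is that the crease $\vect{c}^t$ must be planar for each $t$ and that its containing plane must contain the $x$-axis.

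For sufficiency, I would first check the boundary identities: substituting $\lambda(0)=1,\mu(0)=0$ into \eqref{eq:crease-t}--\eqref{eq:xi-check-t} recovers \eqref{eq:crease} and \eqref{eq:xi-xi-hat}, so $X^0=X$; similarly $\lambda(1)=\mu(1)=0$ recovers $Y$ via \eqref{eq:crease-pattern} and \eqref{eq:Y}. A short Frenet computation along $\vect{c}^t$ gives $\|(\vect{c}^t)'\|=1$ and produces the binormal $B^t=(0,\lambda(t),-1)/\sqrt{1+\lambda(t)^2}$, which is independent of $s$ and already confirms planarity. The isometry $g_{p^t}=g_{q^t}=g$ then reduces to the identities $\|\xi^t\|=1$ and $\xi^t\cdot(\vect{c}^t)'=-\zeta'(s)$, both short algebraic checks. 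Since $\xi^t$ and $\check{\xi}$ are $s$-independent, $p^t$ and $q^t$ are cylinders, hence automatically developable, and the origami duality between their first angular functions reduces to verifying $(\xi^t-\check{\xi})\cdot N^t=0$ and $(\xi^t+\check{\xi})\cdot B^t=0$. Condition~(3) of Definition~\ref{def:Q-origami-deformation} is immediate since $\zeta(0)=\zeta(L)=0$.

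For necessity, given a quarter origami deformation $\{X^t\}$, Lemma~\ref{lem:y} already provides $y^t(s)=\zeta(s)$ and the arc-length property. From $g_{p^t}=g$ one has $\cos\beta_{p^t}(s)=\xi^t\cdot T^t=-\zeta'(s)$, which coincides with $\cos\beta_{q^t}(s)=\check{\xi}\cdot T^t$ by Lemma~\ref{lem:y}, so the two second angular functions agree. Combining this coincidence with the origami duality $\alpha_{p^t}=-\alpha_{q^t}$ and applying the developability equation \eqref{eq:beta} to both surfaces, subtracting the two relations forces $\tau^t(s)\equiv 0$; hence $\vect{c}^t$ is planar for each $t$. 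Since both endpoints of $\vect{c}^t$ are distinct points on the $x$-axis, the containing plane contains the $x$-axis and has equation of the form $By+Cz=0$; nonconstancy of $\zeta$ forces $C\neq 0$, giving $z^t(s)=\lambda(t)\zeta(s)$ for $\lambda(t):=-B/C$. The arc-length identity then determines $(x^t)'(s)^2=1-(1+\lambda(t)^2)\zeta'(s)^2$, with the positive root selected by continuity from $t=0$, and integration produces the displayed formula with $\mu(t):=x^t(0)$. Finally, the ruling is recovered from $(\xi^t-\check{\xi})\parallel B^t$ and $\|\xi^t\|=1$, giving $\xi^t=\check{\xi}-2(\check{\xi}\cdot B^t)B^t$, which simplifies to \eqref{eq:xi-check-t}. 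Continuity of $\lambda,\mu$ and the stated boundary values follow by inspection from continuity of the deformation and the identifications $X^0=X$, $X^1=Y$.

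The main obstacle is the derivation $\tau^t\equiv 0$ in the necessity direction. Although the computation itself is short, it is conceptually the heart of the argument: it requires simultaneously exploiting three ingredients --- arc-length parametrization of $\vect{c}^t$, the coincidence $\beta_{p^t}=\beta_{q^t}$ forced by isometry, and the opposite-sign duality $\alpha_{p^t}=-\alpha_{q^t}$ forced by the origami condition --- and feeding all of them into \eqref{eq:beta} for each of $p^t$ and $q^t$. Once planarity of $\vect{c}^t$ is established, the remaining steps in both directions are essentially bookkeeping.
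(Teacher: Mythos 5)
Your approach tracks the paper's closely: you re-derive the planarity of each crease from the isometry condition (this is the paper's Lemma~\ref{lem:1st-FF-pq} argued inline via \eqref{eq:beta}), you use condition~(3) to force the containing plane through the $x$-axis and obtain $z^t(s)=\lambda(t)\zeta(s)$ (the paper's Lemma~\ref{lem:yz-plane}), and your reflection-formula derivation of $\xi^t$ is a clean equivalent of the paper's Frenet-frame computation. The sufficiency direction as you sketch it is also fine, matching what the paper leaves as ``direct calculation.''

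The genuine gap is in the necessity direction, at the step you dismiss as bookkeeping: ``$(x^t)'(s)^2=1-(1+\lambda(t)^2)\zeta'(s)^2$, with the positive root selected by continuity from $t=0$.'' This hides a nontrivial claim. Before you can fix a sign by continuity, you must rule out $\sigma^t(s):=\sqrt{1-(1+\lambda(t)^2)\zeta'(s)^2}$ vanishing anywhere on $[0,1]\times(0,L)$: if $\sigma^t$ could reach zero, the sign of $(x^t)'$ could flip, and indeed $(x^t)'$ would fail to be smooth there. Moreover, the strict inequality $(1+\lambda(t)^2)\zeta'(s)^2<1$ is part of what the converse of the theorem asserts, so it must be derived, not assumed. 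The paper devotes a dedicated argument to this: setting $\mathcal{Z}=\{(t,s):\sigma^t(s)=0\}$, it computes $\kappa^t(s)=-\psi(t)\zeta''(s)/\sigma^t(s)$ with $\psi(t)=\sqrt{1+\lambda(t)^2}$, and shows via \eqref{eq:boundary-condition} that approaching a point of $\mathcal{Z}$ forces the curvature to blow up, contradicting the requirement that each $\vect{c}^t$ be a smooth crease of positive curvature. (An equivalent per-$t$ argument: since $\zeta''<0$ and $\zeta'\ne0$ wherever $\sigma^t=0$, the quantity $\sigma^t(s)^2$ would have a simple zero, so $(x^t)'=\pm\sigma^t$ could not be smooth there.) You should supply one of these arguments; without it, both the existence of a consistent positive branch and the range condition in the theorem are unjustified.
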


For the proof, 
we prepare the following lemmas
(Lemmas~\ref{lem:1st-FF-pq} and~\ref{lem:yz-plane}).

\begin{lemma}\label{lem:1st-FF-pq}
Let $p, q : U \to\R^3$ be mutually dual developable surfaces 
along 
a space curve $\vect{c}(s)$ with positive curvature
$($cf.\ Definition~$\ref{def:curved-folding})$.
Then, the following are equivalent:
\begin{itemize}
\item[{\rm (i)}]
$\vect{c}(s)$ is a plane curve.
\item[{\rm (ii)}]
The second angular function
$\beta(s)$ of $p(s,v)$
coincides with that $\check{\beta}(s)$
of $q(s,v)$.
\item[{\rm (iii)}]
The first fundamental form $g_p$
of $p(s,v)$
coincides with that $g_q$
of $q(s,v)$.
\end{itemize}
\end{lemma}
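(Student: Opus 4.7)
The plan is to compute both the dual second angular function $\check\beta(s)$ and the first fundamental forms $g_p$, $g_q$ in closed form, and then read off all three equivalences by direct comparison of these formulas.

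For (i) $\Leftrightarrow$ (ii), I would substitute $\alpha \mapsto -\alpha$ in the developability relation \eqref{eq:beta} to get $\cot\check\beta = (\alpha'-\tau)/(\kappa\sin\alpha)$, whence
\[
\cot\beta(s) - \cot\check\beta(s) = \frac{2\tau(s)}{\kappa(s)\sin\alpha(s)}.
\]
Since $\kappa>0$, $\sin\alpha\neq 0$ by \eqref{eq:angle}, and cotangent is injective on $(0,\pi)$, the identity $\beta\equiv\check\beta$ is equivalent to $\tau\equiv 0$, i.e., to $\vect{c}$ being a plane curve.

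For (ii) $\Leftrightarrow$ (iii), I would derive an explicit formula for $g_p$ and apply duality. Since $\vect{c}$ is arc-length and $\xi$ is a unit vector with $T\cdot\xi=\cos\beta$, one has $\|p_v\|=1$ and $p_s\cdot p_v=\cos\beta$, so the task reduces to understanding $\xi'$. Using the Frenet--Serret formulas together with the decompositions $N=\cos\alpha\,\vect{n}_g-\sin\alpha\,\nu$ and $B=\sin\alpha\,\vect{n}_g+\cos\alpha\,\nu$ obtained by inverting \eqref{eq:conormal} and \eqref{eq:normal-p}, a direct expansion gives
\[
\xi' = (\beta'+\kappa_g)\bigl(-\sin\beta\,T+\cos\beta\,\vect{n}_g\bigr) + \bigl((\alpha'+\tau)\sin\beta-\kappa\sin\alpha\cos\beta\bigr)\nu.
\]
The coefficient of $\nu$ vanishes exactly by \eqref{eq:beta}; hence $\xi'$ is tangent to the surface with $\|\xi'\|=|\beta'+\kappa_g|$ and $T\cdot\xi'=-(\beta'+\kappa_g)\sin\beta$, so
\[
g_p=\bigl(1-2v(\beta'+\kappa_g)\sin\beta+v^2(\beta'+\kappa_g)^2\bigr)ds^2+2\cos\beta\,ds\,dv+dv^2,
\]
and the analogous formula holds for $g_q$ with $\beta$ replaced by $\check\beta$, after noting that $\check\kappa_g=\kappa\cos(-\alpha)=\kappa_g$ by \eqref{eq:kappa-g}.

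The comparison then finishes the proof: equating the $ds\,dv$ coefficients in $g_p=g_q$ forces $\cos\beta=\cos\check\beta$, hence $\beta=\check\beta$ since both lie in $(0,\pi)$, after which the remaining coefficients agree automatically; the converse is obvious from the formulas. Combined with (i) $\Leftrightarrow$ (ii), this closes the cycle of equivalences. I expect the main obstacle to be the calculation of $\xi'$ and, in particular, recognising that its normal component is precisely the quantity forced to vanish by the developability equation \eqref{eq:beta}; once this cancellation is noticed, the rest of the lemma reduces to matching coefficients of $ds^2$, $ds\,dv$, and $dv^2$.
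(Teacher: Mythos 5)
Your proposal is correct and follows essentially the same route as the paper: the equivalence (i)\,$\Leftrightarrow$\,(ii) is obtained by substituting $\alpha\mapsto-\alpha$ into \eqref{eq:beta} exactly as the paper does, and (ii)\,$\Leftrightarrow$\,(iii) is read off by comparing coefficients of the first fundamental forms \eqref{eq:1st-FF-p} and \eqref{eq:1st-FF-q}. The only difference is that you derive the formula for $g_p$ from scratch (your computation of $\xi'$ and the cancellation of its $\nu$-component via \eqref{eq:beta} is correct, and your expression $1-2v(\beta'+\kappa_g)\sin\beta+v^2(\beta'+\kappa_g)^2$ expands to the paper's $(\sin\beta-v(\beta'+\kappa_g))^2+\cos^2\beta$), whereas the paper simply cites it from \cite{HNSUY3}; your coefficient comparison via the $ds\,dv$ term is in fact slightly cleaner, as it does not even invoke $\kappa_g\neq0$.
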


We note that
Lemma~\ref{lem:1st-FF-pq} is a well-known fact 
in the field of curved folding.
For the sake of self-containment, we provide a proof here.

\begin{proof}[Proof of Lemma~\ref{lem:1st-FF-pq}]
Let $\alpha(s)$ be the first angular function of $p(s,v)$.  
Since $q(s,v)$ is the dual of $p(s,v)$,
the first angular function of $q(s,v)$ is $\check{\alpha}(s) = -\alpha(s)$.
By \eqref{eq:beta},
\[
\cot \check{\beta}(s) = \frac{\alpha'(s) - \tau(s)}{\kappa(s)\sin \alpha(s)}.
\]
In particular, $\beta(s) = \check{\beta}(s)$
if and only if the torsion $\tau(s)$ vanishes identically,
i.e., $\vect{c}(s)$ is planar,
thereby yielding the equivalence of (i) and (ii).

With respect to (ii) and (iii),
note that
the geodesic curvature $\kappa_g(s)$ of $\vect{c}(s)$ 
as a curve on $p(s,v)$
coincides with that as a curve on $q(s,v)$
by \eqref{eq:kappa-g}.
The first fundamental forms $g_p$ and $g_q$ are given by
(cf.\ \cite[Equation (1.7)]{HNSUY3})
\begin{align}
\label{eq:1st-FF-p}
g_p &= 
\Bigl( \bigl( \sin \beta - v(\beta' + \kappa_g) \bigr)^2 + \cos^2 \beta \Bigr)\, ds^2
+ 2 \cos \beta\, ds\, dv + dv^2,\\
\label{eq:1st-FF-q}
g_q &= 
\Bigl( \bigl( \sin \check{\beta} - v(\check{\beta}' + \kappa_g) \bigr)^2 + \cos^2 \check{\beta} \Bigr)\, ds^2
+ 2 \cos \check{\beta}\, ds\, dv + dv^2.
\end{align}
As $\kappa_g\ne0$ by \eqref{eq:kappa-g},
$g_p = g_q$ if and only if $\beta = \check{\beta}$,
proving the equivalence.
\end{proof}

\begin{lemma}\label{lem:yz-plane}
Let $\{ X^t : U \to\R^3 \}_{t \in [0,1]}$ be a
quarter origami deformation
from $X$ to $Y$
$($cf.\ Definition~$\ref{def:Q-origami-deformation})$. Then, there
exists a continuous function $\lambda(t)$ $(t \in [0,1])$
such that 
$\lambda(0) = 1$,
$\lambda(1) = 0$, and
the crease $\vect{c}^t(s)$ lies on the plane 
$z = \lambda(t)\, y$ in $\R^3$
for each $t \in [0,1]$.
\end{lemma}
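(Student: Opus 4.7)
The plan is to decompose the argument into three logical steps: (i) show that each crease $\vect{c}^t$ is planar, (ii) identify the containing plane using the endpoint condition, and (iii) verify continuity and the boundary values of the resulting slope $\lambda(t)$.

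For step (i), I would invoke Lemma~\ref{lem:1st-FF-pq}. Since $X^t$ is an origami map, the surfaces $p^t$ and $q^t$ are mutually dual developable surfaces along the common curve $\vect{c}^t$, and condition~(2) of Definition~\ref{def:Q-origami-deformation} forces $g_{p^t} = g_{q^t} = g$. The equivalence of (i) and (iii) in Lemma~\ref{lem:1st-FF-pq} then guarantees that $\vect{c}^t$ is planar for every $t \in [0,1]$; denote its plane by $\Pi^t$.

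For step (ii), I would combine Lemma~\ref{lem:y} (which gives $\vect{c}^t(s) = (x^t(s), \zeta(s), z^t(s))$) with $\zeta(0) = \zeta(L) = 0$ and condition~(3) of Definition~\ref{def:Q-origami-deformation} to place both endpoints $\vect{c}^t(0)$ and $\vect{c}^t(L)$ on the $x$-axis. Because $p^t$ is required to be an embedding in the sense of Definition~\ref{def:curved-folding}, we cannot have $p^t(0,0) = p^t(L,0)$, so the two endpoints are distinct. The plane $\Pi^t$ then contains the full $x$-axis, so its equation has the form $B^t y + C^t z = 0$; moreover $C^t \neq 0$, for otherwise $\Pi^t$ would coincide with $\{y=0\}$, contradicting $\zeta(s) > 0$ on $(0,L)$. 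Setting $\lambda(t) := -B^t/C^t$ yields $z^t(s) = \lambda(t)\zeta(s) = \lambda(t)\, y^t(s)$, so $\vect{c}^t$ lies on the plane $z = \lambda(t) y$.

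For step (iii), the boundary values are read off directly: at $t=0$ the crease given by \eqref{eq:crease} satisfies $z=y=\zeta$, so $\lambda(0)=1$, and at $t=1$ the crease $\gamma$ from \eqref{eq:crease-pattern} has $z\equiv 0$, so $\lambda(1)=0$. For continuity, I would fix any $s_\ast\in(0,L)$ with $\zeta(s_\ast)>0$ and write $\lambda(t) = z^t(s_\ast)/\zeta(s_\ast)$; continuity of the family $t\mapsto X^t$ (and hence of $t\mapsto\vect{c}^t(s_\ast)$) then transfers to $\lambda$. The step I expect to require the most care is (ii): one must rule out $\vect{c}^t(0) = \vect{c}^t(L)$, since otherwise $\Pi^t$ is only forced through a single point of the $x$-axis and could take the form $A^t(x - x^t(0)) + B^t y + C^t z = 0$ with $A^t \neq 0$, which need not give the desired $z = \lambda y$ structure. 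Invoking the embedding requirement built into Definition~\ref{def:curved-folding} is the cleanest way to close this gap, but it is easy to overlook.
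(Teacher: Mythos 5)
Your proposal follows essentially the same route as the paper: planarity of the crease via Lemma~\ref{lem:1st-FF-pq} together with condition~(2), the endpoint constraint from condition~(3) and Lemma~\ref{lem:y} to force the plane to contain the $x$-axis and hence have the form $z=\lambda(t)y$, and the boundary values $\lambda(0)=1$, $\lambda(1)=0$ read off from condition~(1). You are slightly more careful than the paper in two spots the paper leaves implicit --- justifying $\vect{c}^t(0)\neq\vect{c}^t(L)$ (the paper just writes $\mu(t)<\eta(t)$ without comment; injectivity of the crease curve suffices) and verifying continuity of $\lambda$ via $\lambda(t)=z^t(s_*)/\zeta(s_*)$ at a fixed interior $s_*$ --- but the overall decomposition and the key lemma used are the same.
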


\begin{proof}
By (2) of Definition~\ref{def:Q-origami-deformation}
and Lemma~\ref{lem:1st-FF-pq},
the crease $\vect{c}^t(s)$ is a planar curve.
Hence, there exist functions 
$a(t)$, $b(t)$, $c(t)$, and $d(t)$
such that
$\vect{c}^t(s)$ lies on the plane
\[
\Pi^t := \{ (x,y,z) \in \R^3 \mid a(t)\, x + b(t)\, y + c(t)\, z = d(t) \}
\quad (t \in [0,1]).
\]
By (3) of Definition~\ref{def:Q-origami-deformation},
the endpoints of $\vect{c}^t(s)$
are written as
$\vect{c}^t(0) = (\mu(t), 0, 0)$
and 
$\vect{c}^t(L) = (\eta(t), 0, 0)$,
where $\mu(t)$ and $\eta(t)$ are continuous functions on $[0,1]$
satisfying $\mu(t) < \eta(t)$.
By (1) of Definition~\ref{def:Q-origami-deformation},
we have $\mu(0) = \mu(1) = 0$.
Since the crease $\vect{c}^t(s)$ 
lies on $\Pi^t$ for each $t$,
we have $a(t)\, \mu(t) = a(t)\, \eta(t) = d(t)$.
Hence, $a(t)$ and $d(t)$ must be identically zero.

Next, we claim that $c(t)$ is nonzero for each $t$.
If not, there exists $t_0 \in [0,1]$ such that $c(t_0) = 0$.
By Lemma~\ref{lem:y}, the second component of 
$\vect{c}^t(s)$ is $\zeta(s)$, so $b(t_0)\, \zeta(s) = 0$.
Then, by \eqref{eq:boundary-condition}, we have $b(t_0) = 0$,
which contradicts the assumption that $\Pi^t$ is a plane.
Therefore, $c(t) \neq 0$ for all $t$.
Setting $\lambda(t) := -b(t)/c(t)$,
the crease $\vect{c}^t(s)$ lies on the plane $z = \lambda(t)\, y$.
By (1) of Definition~\ref{def:Q-origami-deformation},
we have $\lambda(0) = 1$, $\lambda(1) = 0$,
proving the assertion.
\end{proof}

\begin{proof}[Proof of Theorem~\ref{thm:Koiso-deformation}]
By direct calculation, the first assertion follows.  
We now prove the converse:
Let $\vect{c}^t : [0, L] \to \R^3$ 
be the crease of the origami map $X^t$,
and write $\vect{c}^t(s) = (x^t(s),\, y^t(s),\, z^t(s))$.
By Lemma~\ref{lem:y}, we have $y^t(s) = \zeta(s)$.
Then, by Lemma~\ref{lem:yz-plane},
there exists a continuous function $\lambda(t)$ $(t \in [0,1])$
such that $z^t(s) = \lambda(t)\, \zeta(s)$.
Since $\vect{c}^t(s) = (x^t(s), \zeta(s), \lambda(t)\zeta(s))$ 
is parametrized by arc length 
(Lemma~\ref{lem:y}), we have
\begin{equation}\label{eq:x^t-prime}
|(x^t)'(s)| = \sigma^t(s),
\quad
\text{where} \quad
\sigma^t(s) := \sqrt{1 - (1 + \lambda(t)^2) \zeta'(s)^2}.
\end{equation}
Now, we prove by contradiction that %the zero set 
$\mathcal{Z} := \{(t,s) \in [0,1] \times (0,L) \mid \sigma^t(s) = 0\}$ 
is empty.
Suppose, for contradiction, that $\mathcal{Z}$ is non-empty.
Since $\lambda(0) = 1$ and by \eqref{eq:zeta-prime}, 
the function $\sigma^t(s)$ is not identically zero on $[0,1] \times (0,L)$.
Thus, there exist a point $(t_0, s_0) \in \mathcal{Z}$ and a sequence 
$\{(t_n, s_n)\}_{n=1}^\infty$ in $[0,1] \times (0,L) \setminus \mathcal{Z}$
such that $(t_n, s_n) \to (t_0, s_0)$ as $n \to \infty$.
By \eqref{eq:x^t-prime}, the curvature function
$\kappa^t(s) := \| (\vect{c}^t)''(s) \|$
is given by
\[
\kappa^t(s) =
-\, \frac{ \psi(t)\, \zeta''(s) }{ \sigma^t(s) },
\quad \text{where} \quad
\psi(t) := \sqrt{1 + \lambda(t)^2 }.
\]
By \eqref{eq:boundary-condition}, 
it follows that
$\kappa^{t_n}(s_n)$ diverges to $+\infty$ as $n \to \infty$,
which is a contradiction.
Hence, 
$\mathcal{Z} = \varnothing$,
i.e.,
$\sigma^t(s)$ is positive for all $t \in [0,1]$ and $s \in (0,L)$.
By \eqref{eq:x^t-prime}, we have $(x^t)'(s)=\sigma^t(s)$.
Setting $\mu(t) := x^t(0)$ gives \eqref{eq:crease-t}.

The principal normal vector field $N^t(s)$
and the binormal vector field $B^t(s)$
are given by
\[
N^t(s) = 
\left( \psi(t)\, \zeta'(s),\, 
-\frac{\sigma^t(s)}{\psi(t)},\, 
-\frac{\lambda(t)\, \sigma^t(s) }{\psi(t)}\right),
\quad
B^t(s) = 
\left(0,\, \frac{\lambda(t)}{\psi(t)},\, -\frac{1}{\psi(t)}\right),
\]
respectively.
The unit conormal vector field
$\check{\vect{n}}_g^t(s)$ on $q^t(s,v)$ along $\vect{c}^t(s)$ is
\[
\check{\vect{n}}_g^t(s) =
\frac{1}{ \sqrt{1 - \zeta'(s)^2} }
\Bigl( \zeta'(s)\, \sigma^t(s),\, -1 + \zeta'(s)^2,\, \lambda(t)\, \zeta'(s)^2 \Bigr).
\]
Thus, the first angular function $\alpha^t(s)$
of $q^t(s,v)$ satisfies
\begin{equation}\label{eq:alpha-t}
  \cos \alpha^t(s)
  = \frac{ \sigma^t(s) }{ \psi(t) \sqrt{1 - \zeta'(s)^2} },
  \quad
  \sin \alpha^t(s)
  = -\, \frac{ \lambda(t) }{ \psi(t) \sqrt{1 - \zeta'(s)^2} }.
\end{equation}

By (2) of Definition~\ref{def:Q-origami-deformation},
and from \eqref{eq:1st-FF-p} and \eqref{eq:1st-FF-q},
the second angular functions of $p^t(s,v)$ and $q^t(s,v)$ 
coincide with $\beta(s)$ for $p(s,v)$ given by \eqref{eq:beta-p}.
Since $X^t : U \to\R^3$ is an origami map,
$X^t(s,v) = p^t(s,v)$ holds on $U_+$,
where $p^t(s,v) = \vect{c}^t(s) + v\, \xi^t(s)$
and 
\[
\xi^t(s) = \cos \beta(s)\, T^t(s) 
+ \sin \beta(s)\, \bigl( \cos \alpha^t(s)\, N^t(s) - \sin \alpha^t(s)\, B^t(s) \bigr).
\]
Substituting \eqref{eq:alpha-t} and \eqref{eq:beta-p}
yields \eqref{eq:xi-check-t},
which completes the proof.
\end{proof}

Using the formula in Theorem~\ref{thm:Koiso-deformation},
we can construct explicit examples of origami deformations for pillow boxes.

\begin{example}\label{ex:Koiso-deformation}
Let $(b, \zeta(s))$ be the fundamental data given in 
Example~\ref{ex:F-data}.
Then, $\mu(t) := 0$ and $\lambda(t) := 1 - t$
for $t \in [0,1]$
are continuous functions satisfying the conditions 
of Theorem~\ref{thm:Koiso-deformation}.
Figures~\ref{fig:pillow-deformation} 
and~\ref{fig:pillow-deformation-long}
illustrate the resulting origami deformation 
$\{ M^t \}_{t \in [0,1]}$.
\end{example}

\begin{figure}[htb]
\centering
\begin{tabular}{c@{\hspace{4mm}}c@{\hspace{4mm}}c}
  \resizebox{4.0cm}{!}{\includegraphics{fig_pillow_t00.eps}}&
  \resizebox{4.0cm}{!}{\includegraphics{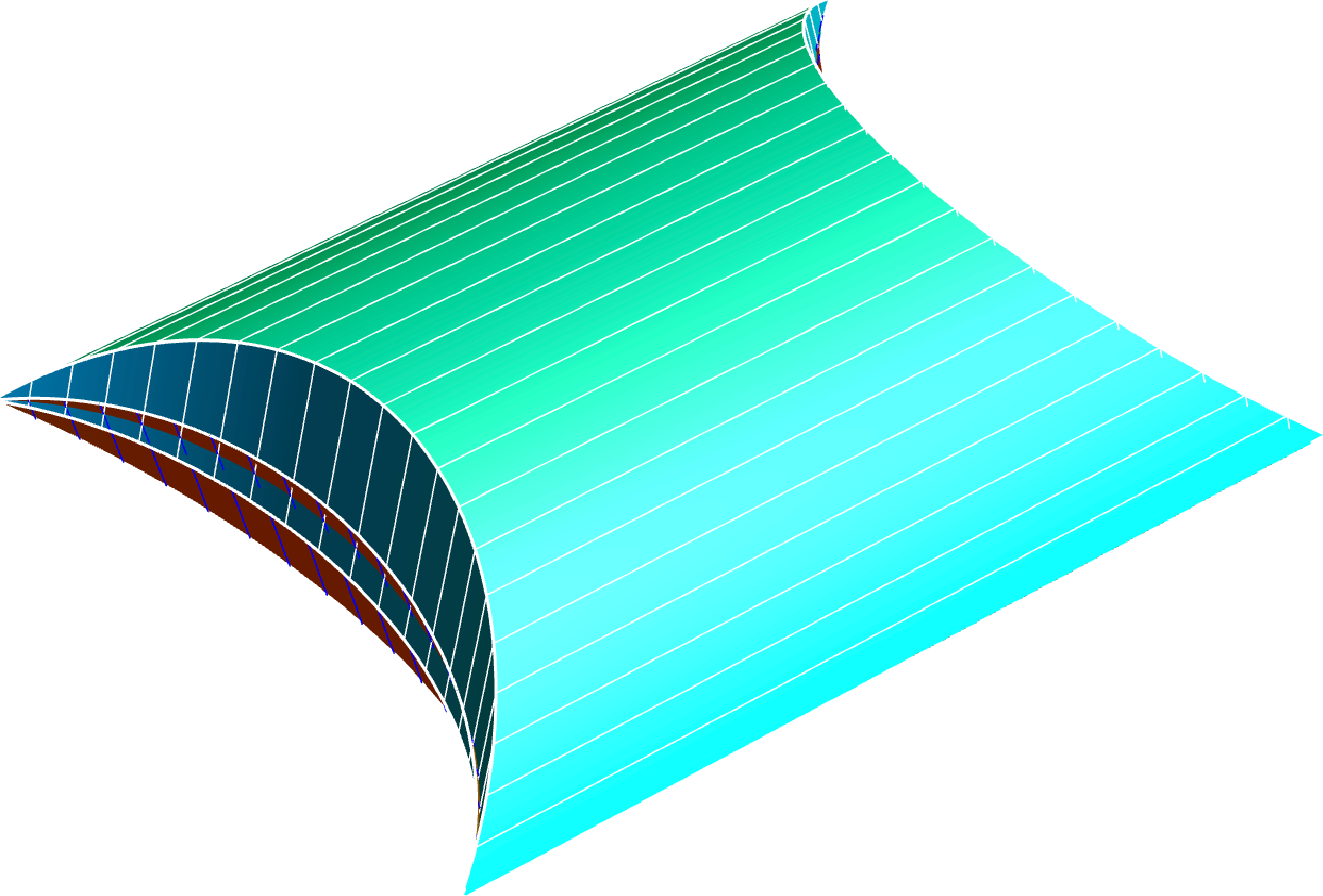}}&
  \resizebox{4.0cm}{!}{\includegraphics{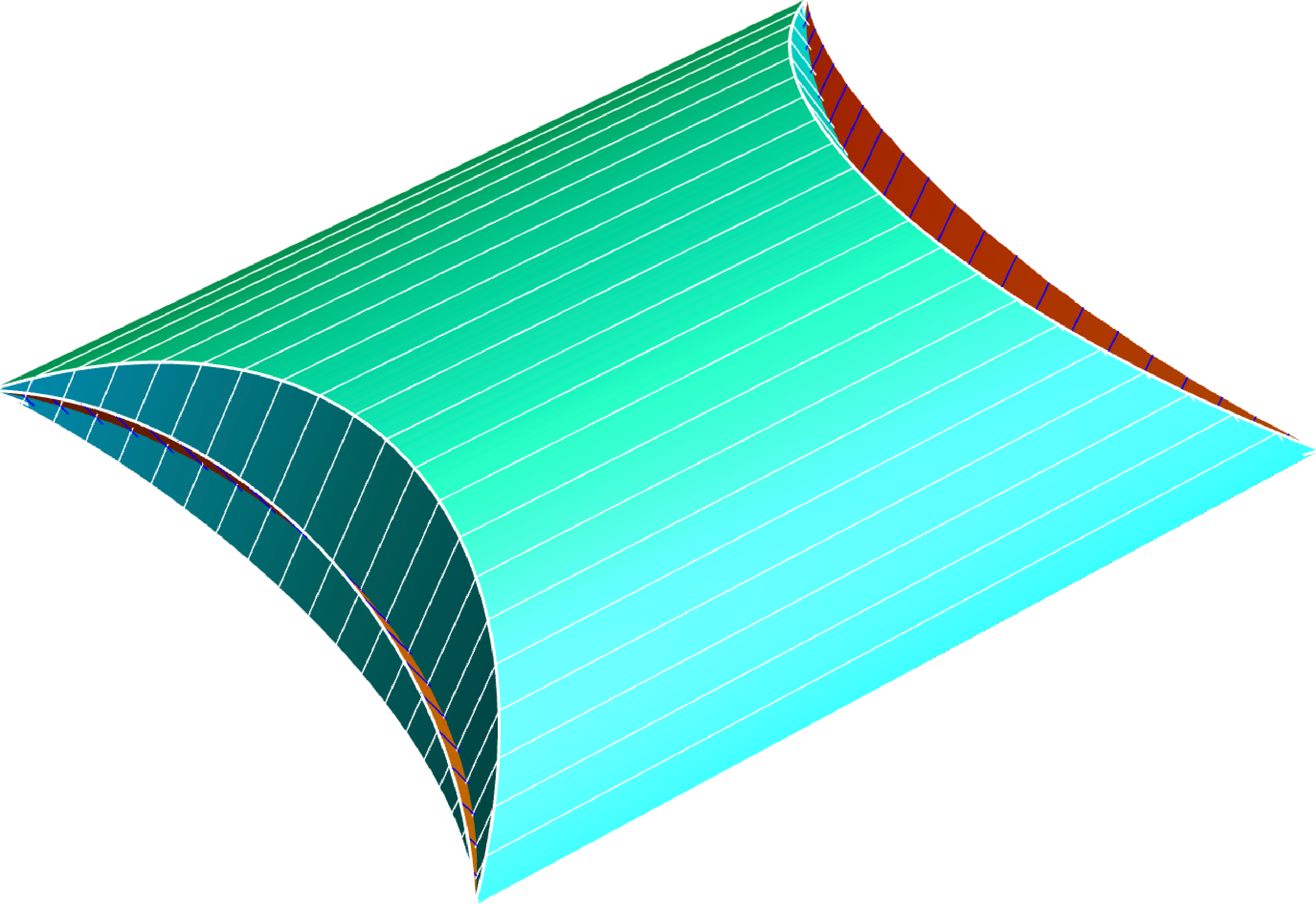}}\\
\vspace{0.5cm}
  {\footnotesize $M = M^0$}&
  {\footnotesize $M^{0.2}$}&
  {\footnotesize $M^{0.4}$}\\
  \resizebox{4.0cm}{!}{\includegraphics{fig_pillow_t06.eps}}&
  \resizebox{4.0cm}{!}{\includegraphics{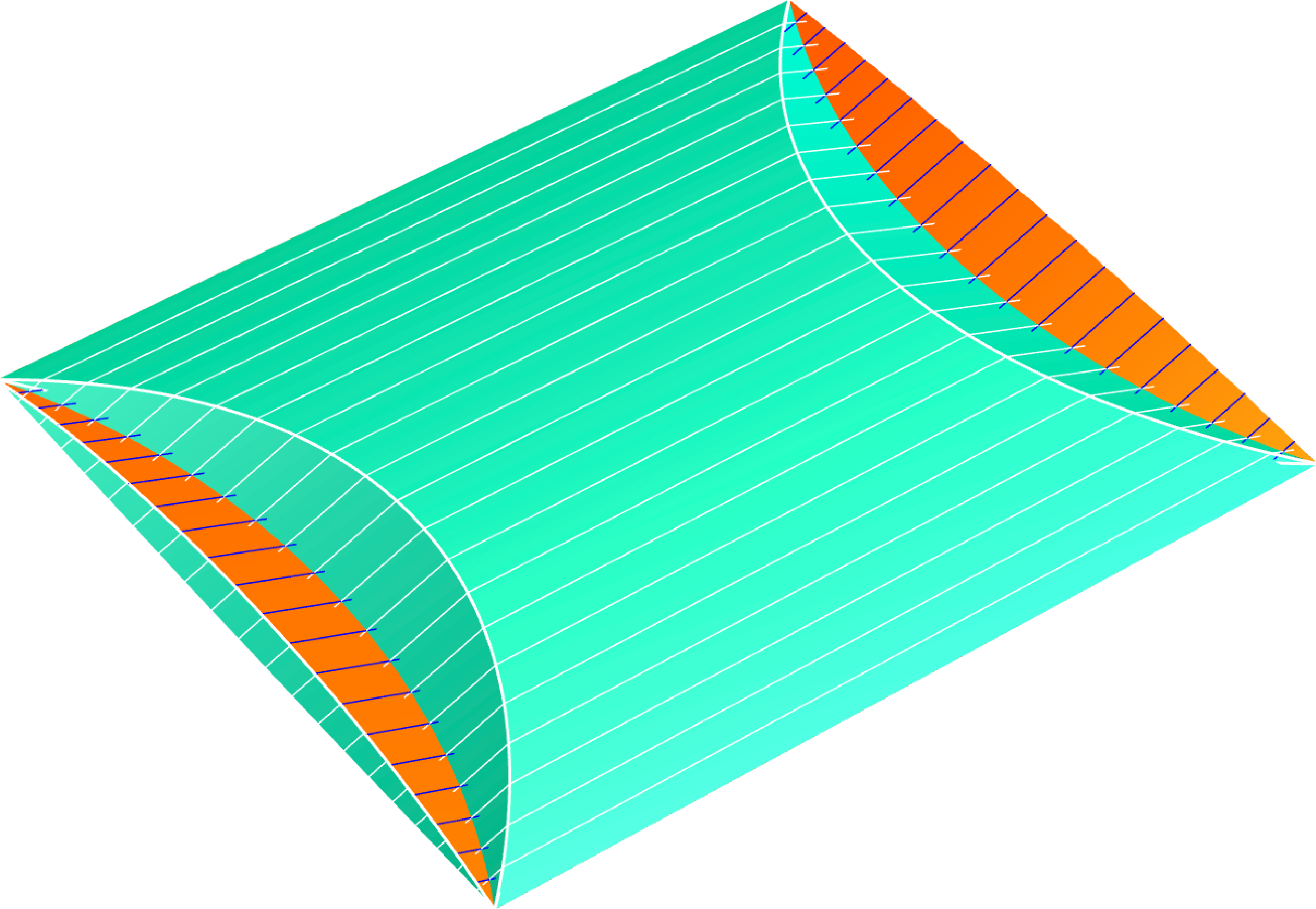}}&
  \resizebox{4.0cm}{!}{\includegraphics{fig_pillow_t10.eps}}\\
  {\footnotesize $M^{0.6}$}&
  {\footnotesize $M^{0.8}$}&
  {\footnotesize $\tilde{R} = M^1$}
\end{tabular}
\caption{The origami deformation $\{ M^t \}_{t \in [0,1]}$ of 
a pillow box $M = M^0$ to 
the double rectangle $\tilde{R} = M^1$
(see Example~\ref{ex:Koiso-deformation}).}
\label{fig:pillow-deformation-long}
\end{figure}

In Figure~\ref{fig:pillow-deformation-long},
one can observe that the upper blue part and 
the lower orange part intersect in their interiors
for $t$ other than $0$ or $1$.
In particular, $M^t$
is not homeomorphic to a $2$-sphere 
for any $t \in (0,1)$,
although both the pillow box $M^0 = M$ and 
the double rectangle $M^1 = \tilde{R}$
are homeomorphic to a sphere.
As a corollary of Theorem~\ref{thm:Koiso-deformation}, 
this phenomenon holds for general origami deformations as well:

\begin{corollary}\label{cor:isometric-deformation}
Any origami deformation necessarily changes the topology of a pillow box.
\end{corollary}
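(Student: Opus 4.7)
The plan is to invoke Theorem~\ref{thm:Koiso-deformation} to pick out an intermediate parameter $t_\ast\in(0,1)$ at which $M^{t_\ast}$ fails to be a topological $2$-manifold; since $M^0 = M \cong S^2$, this precludes topology preservation along any origami deformation. The geometric source of the obstruction is a transverse self-intersection between the quarter domain $P^{t_\ast}$ and its horizontal reflection $\rho_H(P^{t_\ast})$ in the plane $\{z=0\}$.

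By Theorem~\ref{thm:Koiso-deformation}, every origami deformation is encoded by continuous functions $\lambda,\mu:[0,1]\to\R$ with $\lambda(0)=1$ and $\lambda(1)=0$. The intermediate value theorem thus yields $t_\ast\in(0,1)$ with $\lambda_\ast:=\lambda(t_\ast)\in(0,1)$. Using~\eqref{eq:crease-t} and~\eqref{eq:xi-check-t}, the $z$-coordinate of $p^{t_\ast}(s,v)$ is $\lambda_\ast\zeta(s) - v\cdot\frac{2\lambda_\ast}{1+\lambda_\ast^2}$, which vanishes at $v_\ast(s):=\frac{1+\lambda_\ast^2}{2}\zeta(s)$. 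Since $\frac{1+\lambda_\ast^2}{2}\in(\tfrac12,1)$, $v_\ast(s)$ lies strictly inside $(0,\zeta(s))$ for every $s\in(0,L)$, so $p^{t_\ast}(U_+)$ crosses the horizontal plane $H_M=\{z=0\}$ along the regular arc
\[
\gamma_\ast(s) := p^{t_\ast}(s, v_\ast(s)), \qquad s \in (0,L),
\]
whose $y$-coordinate computes to $\frac{1+\lambda_\ast^2}{2}\zeta(s)\in(0,b)$. Because $\rho_H$ fixes $H_M$ pointwise, $\gamma_\ast\subset P^{t_\ast}\cap\rho_H(P^{t_\ast})$.

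Next I would verify that $\gamma_\ast$ is a genuine non-manifold locus of $M^{t_\ast}$. The $y$-range $(0,b)$ keeps $\gamma_\ast$ away from $\rho_V(P^{t_\ast})\cup\rho_H\rho_V(P^{t_\ast})\subset\{y\geq b\}$, and the nonvanishing $z$-coordinate $\lambda_\ast\zeta(s)$ of $q^{t_\ast}$ keeps $q^{t_\ast}(U_-)$ away from $\gamma_\ast$ on $(0,L)$; so near any $P_0:=\gamma_\ast(s_0)$ the set $M^{t_\ast}$ agrees with $p^{t_\ast}(U_+)\cup\rho_H(p^{t_\ast}(U_+))$. Transversality at $P_0$ reduces to observing that the vectors $(\vect{c}^{t_\ast})'(s_0)$, $\xi^{t_\ast}+\rho_H\xi^{t_\ast}$, and $\xi^{t_\ast}-\rho_H\xi^{t_\ast}$ span $\R^3$: the first has nonzero $x$-component $\sigma^{t_\ast}(s_0)>0$, while $\xi^{t_\ast}\pm\rho_H\xi^{t_\ast}$ are nonzero pure-$y$ and pure-$z$ vectors (using $\lambda_\ast^2\neq 1$ and $\lambda_\ast\neq 0$, respectively). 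Hence the two smooth surfaces meet transversally, so $M^{t_\ast}$ is locally modeled near $P_0$ on the union of two $2$-planes meeting in a line, which is not locally Euclidean of dimension $2$. Therefore $M^{t_\ast}\not\cong S^2$.

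The main subtlety is the bookkeeping needed to ensure that no other sheet of $M^{t_\ast}$ quietly smooths over the crossing; the $y$- and $z$-range constraints above isolate the local picture near $\gamma_\ast$ to precisely two transverse sheets, after which the non-manifold conclusion is automatic from the standard fact that a punctured neighborhood of a crossing point in two transverse planes has four connected components rather than one.
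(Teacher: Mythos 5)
Your proof is correct and follows the same overall strategy as the paper's: both invoke Theorem~\ref{thm:Koiso-deformation} to get the controlling function $\lambda(t)$, pick an intermediate parameter with $\lambda \in (0,1)$, and extract from the representation formula a $z$-coordinate computation showing the sheet crosses the horizontal plane $\{z=0\}$. The difference is in the final topological step. The paper argues by contradiction at the level of the \emph{horizontal end}: it notes that if the origami deformation preserved topology, then $\Psi^t(s)$ would have to lie in $\{z=0\}$ for all $t$, and then computes that its third component is negative at $t_1$ with $\lambda(t_1)=1/2$. You instead work directly with the \emph{interior} of $p^{t_*}(U_+)$, locate the crossing curve $\gamma_*$ at $v_*(s)=\frac{1+\lambda_*^2}{2}\zeta(s)$, and explicitly verify that $M^{t_*}$ fails to be locally Euclidean there by a transversality computation (spanning $\R^3$ by the crease tangent together with the symmetric and antisymmetric parts of $\xi^{t_*}$, and carefully checking that no other quarter sheet passes near $\gamma_*$). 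Your version buys a fully self-contained justification of why the crossing destroys the topology — what the paper leaves at the level of ``must be contained within the horizontal plane'' — at the cost of the extra transversality and locality bookkeeping. The two proofs are the same in spirit and in their key computation; yours is more explicit about the topological obstruction.
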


\begin{proof}
We use the notations given in Theorem \ref{thm:Koiso-deformation}.
Due to the symmetry of pillow boxes with respect to the horizontal plane $H_M$
(cf.\ Definition~\ref{def:pillow-box}), 
if an origami deformation $\{M^t\}_{t\in [0,1]}$ preserves
the topology of a pillow box, 
then for each $t\in [0,1]$, 
the horizontal end (cf.\ Definition~\ref{def:Q-origami-deformation})
$$
\Psi^t(s)
=
X^t(s,\zeta(s))
=
\vect{c}^t(s)+\zeta(s)\xi^t
\qquad
(s\in [0,L])
$$ 
of the corresponding quarter origami deformation 
$\{X^t : U \to\R^3\}_{t\in [0,1]}$
must always be contained within the horizontal plane $z=0$.

The third component of the horizontal end $\Psi^t(s)$
is written as 
$$
  \lambda(t)\zeta(s) -\zeta(s)\frac{2\lambda(t)}{1+\lambda(t)^2}
  =  -\lambda(t)\frac{1-\lambda(t)^2}{1+\lambda(t)^2}\zeta(s).
$$
Since $\lambda(t)$ is continuous,
there exists $t_1\in (0,1)$ such that $\lambda(t_1)=1/2$.
Then 
the third component of the horizontal end $\Psi^{t_1}(s)$ at $t_1$
is negative for each $s\in (0,L)$.
Hence, we obtain the assertion.
\end{proof}

%\begin{acknowledgements}
%The authors would like to expresses gratitude to 
%XXX for careful reading of the first draft.
%He also would like to thank YYY for helpful comments.
%\end{acknowledgements}

%%%%%%%%%%%%%%%%%%%%%%%%%%%%%%%%%%
%%%%%%%%%%%%%%%%%%%%%%%%%%%%%%%%%%
%%%%%%%%%%%%%%%%%%%%%%%%%%%%%%%%%%
%%%%%%%%%%%%%%%%%%%%%%%%%%%%%%%%%%

\end{document}